\documentclass[11pt,reqno]{amsart}
\usepackage{hyphenat}

\usepackage[a4paper,margin=27mm]{geometry}
\usepackage{amsmath,amssymb,amsthm,mathtools}
\usepackage{microtype}
\usepackage{xcolor}
\usepackage[colorlinks=true,linkcolor=blue!55!black,citecolor=blue!55!black,urlcolor=blue!55!black,draft]{hyperref}

\numberwithin{equation}{section}
\allowdisplaybreaks

\newtheorem{theorem}{Theorem}[section]
\newtheorem{proposition}[theorem]{Proposition}
\newtheorem{lemma}[theorem]{Lemma}
\newtheorem{corollary}[theorem]{Corollary}
\newtheorem{remark}[theorem]{Remark}

\newcommand{\R}{\mathbb{R}}
\newcommand{\tr}{\operatorname{tr}}
\newcommand{\Ccap}{\mathcal{C}_{\theta}}
\newcommand{\Sph}{\mathbb{S}}
\newcommand{\dd}{\,d\sigma}

\newcommand{\cL}{\mathcal{L}}

\newcommand{\diver}{\operatorname{div}}
\newcommand{\ip}[2]{\left\langle #1,#2\right\rangle}
\newcommand{\norm}[1]{\left|#1\right|}

\title[Capillary Gauss solitons]{Uniqueness of capillary Gauss solitons}

\author[X. Mei]{Xinqun Mei}
\address[X. Mei]{Key Laboratory of Pure and Applied Mathematics, School of Mathematical Sciences, Peking University, Beijing, 100871, P.R. China}
\email{qunmath@pku.edu.cn}
	
\author[G. Wang]{Guofang Wang}
\address[G. Wang]{Mathematisches Institut, Albert-Ludwigs-Universit\"{a}t Freiburg, Freiburg im Breisgau, 79104, Germany}
\email{guofang.wang@math.uni-freiburg.de}

\author[L. Weng]{Liangjun Weng}
\address[L. Weng]{Centro di Ricerca Matematica Ennio De Giorgi, Scuola Normale Superiore, Pisa, 56126, Italy}
\email{liangjun.weng@sns.it}

\subjclass[2020]{Primary: 53C24. Secondary: 52A39, 35J96, 58J50.}

\keywords{capillary Gauss solitons; rigidity; convex hypersurfaces; Alexandrov–Fenchel inequalities; weighted Poincaré inequalities.}

\begin{document}
\begin{abstract} We prove the rigidity conjecture of \cite[Conjecture~1.2]{MeiWangWeng} for smooth strictly convex capillary Gauss solitons in a Euclidean half-space with an acute contact angle: every such soliton is a spherical cap. Combined with our previous convergence result for the capillary Gauss curvature flow \cite[Theorem~1.1]{MeiWangWeng}, it follows that the flow starting from a strictly convex capillary hypersurface with an acute contact angle converges to a capillary spherical cap, after a suitable rescaling. 
\end{abstract}
\maketitle


\section{Introduction and main results}

In this paper we resolve, in every dimension $n\geq1$ and for acute contact angles $\theta\in(0,\frac\pi2)$, the rigidity problem for capillary Gauss solitons proposed in \cite{MeiWangWeng}. Combined with the convergence theorem for the capillary Gauss curvature flow proved in \cite[Theorem~1.1]{MeiWangWeng}, this shows that the rescaled capillary Gauss curvature flow starting from any smooth strictly convex capillary hypersurface in a half-space with acute contact angle converges to a capillary spherical cap.

The endpoint $\theta=\frac\pi2$ reduces to the closed problem: reflecting an orthogonal capillary hypersurface across the boundary hyperplane turns the capillary soliton equation into the Gauss soliton equation on a closed hypersurface. In the closed setting, rigidity of strictly convex Gauss curvature solitons was proved by Choi--Daskalopoulos \cite{CD}; together with the work of Guan--Ni \cite{GN}, this completes the proof of global convergence of the (closed) Gauss curvature flow for $n\geq 3$. The case $n=2$ was proved earlier by Andrews \cite{andrews1999}. For alternative classification arguments for Gauss soliton, see  \cite{IvakiMilman, Saro2022}; for further background, see \cite{andrews1996, AGN, BCD, Firey, Tso1985} and \cite[Chapters~15--17]{acgl}. The acute-angle case treated here is therefore genuinely a boundary problem and requires a different rigidity mechanism. Our approach does not apply to obtuse angles $\theta>\frac\pi2$; see Remark \ref{rem:acute-angle-necessary}.

The two-step maximum principle argument of Choi--Daskalopoulos \cite{CD} (see also \cite{BCD}) does not seem to adapt directly to capillary Gauss solitons, for two reasons. First, the normalized equation \eqref{eq:normalized-equation} is an anisotropic logarithmic Minkowski-type problem. For the classical logarithmic Minkowski problem, see Böröczky--Lutwak--Yang--Zhang \cite{BLYZ2013}. In the present setting, the anisotropy is induced by a nonconstant weight $\ell$. Uniqueness for the anisotropic logarithmic Minkowski problem remains widely open and may fail without additional assumptions. To the best of our knowledge, in the planar case, Böröczky--Lutwak--Yang--Zhang \cite{BLYZ}  solved the uniqueness for the logarithmic-Minkowski problem for smooth strictly convex origin-symmetric bodies. 
In higher dimensions, it is only known under the assumption that the anisotropy is close to a constant, via approaches different from the two-step maximum principle; see \cite{BS, Chen, HI1}, among others.  Second, the hypersurfaces considered here have a boundary, so any auxiliary quantity used in a maximum principle must satisfy a compatible boundary behavior to control possible boundary extrema. These issues obstruct a straightforward boundary version of the closed argument.

This motivates an integral approach. We develop a capillary analogue of the method of Ivaki--Milman \cite{IvakiMilman}, which replaces pointwise curvature estimates by sharp integral inequalities. The capillary adaptation is not formal: integration by parts produces boundary terms, and the natural test functions dictated by the spherical-cap geometry do not satisfy a common boundary condition.

The main new ingredient is a frame adapted to the capillary boundary. In the spherical-cap formulation, after normalizing the support function by the spherical-cap model, this frame yields $n+1$ distinguished modes, splitting into $n$ Neumann modes and one Dirichlet mode. The Neumann modes can be treated by a boundary analogue of the closed integral argument, using the local capillary Alexandrov--Fenchel inequality established in \cite[Theorem~3.2]{MeiWangWengXia} when $n\geq2$, and the classical planar Minkowski inequality when $n=1$. The Dirichlet mode has no counterpart in the closed problem and is controlled by a separate integral identity. Combining the estimates for the two modes leads to a sharp reverse weighted Poincar\'e inequality, whose equality case forces the normalized support function \eqref{eqn:capillary-support-function} to be constant, equivalently the usual support function $h\equiv \ell$ and hence gives the desired rigidity.

Let $n\geq1$, and let $E_1,\ldots,E_{n+1}$ denote the standard basis of
$\R^{n+1}$. We set
\begin{eqnarray*}
  \R^{n+1}_{+}
  \coloneqq \left\{x\in\R^{n+1}: x_{n+1}=\langle x,E_{n+1}\rangle>0 \right\},
  \qquad e\coloneqq -E_{n+1}.
\end{eqnarray*}
We write $X$ for the position vector, $\nu$ for the outward unit normal,
and $K$ for the Gauss curvature.
A properly embedded smooth compact hypersurface $\Sigma$ in
$\overline{\R^{n+1}_+}$, with boundary
$\partial\Sigma\subset\partial\R^{n+1}_+$, is called a
\emph{capillary hypersurface} if it meets $\partial\R^{n+1}_+$ at a
constant contact angle $\theta\in(0,\pi)$; with our choice of orientation,
this means
\[
  \langle\nu,e\rangle=-\cos\theta
  \qquad\text{on }\partial\Sigma.
\]
When $n=1$, a smooth strictly convex capillary hypersurface is understood
to be a connected embedded arc with everywhere positive curvature which,
together with the segment of $\partial\R^2_+$ joining its endpoints,
bounds a convex body in $\overline{\R^2_+}$. In this case, $K$ is the
curvature of the arc.
A simple example of a capillary hypersurface is the unit spherical cap
\begin{eqnarray*}
  \Ccap \coloneqq \left\{\xi\in\overline{\R^{n+1}_{+}}: |\xi-\cos\theta\,e|=1 \right\}.
\end{eqnarray*}
We denote by $\sigma$, $\nabla$, and $d\sigma$ the round metric,
Levi--Civita connection, and volume element on $\Ccap$, and by $\mu$ the
outward unit conormal along $\partial\Ccap\subset\Ccap$.
The support function of $\Ccap$ is
\[
  \ell(\xi)\coloneqq
  \sin^2\theta+\cos\theta\,\langle\xi,e\rangle \geq 1-|\cos\theta|>0.
\]

We are now in a position to state the main result.
\begin{theorem}
\label{thm:classification}
Let $n\geq1$, and let $\Sigma\subset\overline{\R^{n+1}_{+}}$ be a smooth strictly convex capillary hypersurface with contact angle $\theta\in(0,\frac\pi2)$. Suppose that $\Sigma$ is a capillary Gauss soliton, namely its Gauss curvature satisfies
\begin{equation}
  K
  =\lambda\,
  \frac{\langle X-p,\nu\rangle}{1+\cos\theta\,\langle\nu,e\rangle}
  \qquad \text{on }\Sigma,
  \label{eq:geometric-soliton}
\end{equation}
for some $p\in\partial\R^{n+1}_{+}$ and $\lambda>0$. Then
\begin{equation}
  \Sigma = p+\lambda^{-1/(n+1)}\Ccap.
  \label{eq:classification}
\end{equation}
In other words, the unit spherical cap $\Ccap$ is the unique smooth strictly convex capillary Gauss soliton, up to translations along $\partial\R^{n+1}_{+}$ and scalings.
\end{theorem}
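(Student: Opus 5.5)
The plan is to reduce the soliton equation to a PDE on the model cap $\Ccap$ and then run a capillary version of the Ivaki--Milman integral argument.

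\emph{Reduction.} After translating by $p$ and scaling, we may take $p=0$ and $\lambda=1$. We parametrize $\Sigma$ by the capillary Gauss map $\tilde\nu:\Sigma\to\Ccap$, $\tilde\nu=\nu+\cos\theta\,e$, and write $h(\xi)=\langle X,\nu\rangle$ for the support function. The boundary condition becomes
\[
\nabla_\mu h=\cot\theta\, h \quad\text{on }\partial\Ccap,
\]
and $\ell$ satisfies the same Robin condition. Since $1+\cos\theta\langle\nu,e\rangle=\ell(\xi)$, equation \eqref{eq:geometric-soliton} reads
\[
\det(\nabla^2h+h\sigma)=\frac{\ell}{h}.
\]
Setting $u=h/\ell$, which satisfies a pure Neumann condition $\nabla_\mu u=0$, we obtain a normalized equation \eqref{eq:normalized-equation} of anisotropic log-Minkowski type, with model solution $u\equiv1$. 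The goal is to show that $u$ is constant; the equation then forces $u\equiv1$, hence $h=\ell$, which gives \eqref{eq:classification}.

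\emph{Second variation and test functions.} The equation is the Euler--Lagrange equation of
\[
\int_{\Ccap}\ell\log h\,d\sigma-\frac{1}{n+1}\log \mathrm{Vol}.
\]
Testing the linearized operator $L_h$ (the mixed-discriminant operator $\sigma_n^{ij}(A[h])\nabla_{ij}+\operatorname{tr}$) gives the closed-case mechanism. The local capillary Alexandrov--Fenchel inequality \cite[Theorem~3.2]{MeiWangWengXia}, or the planar Minkowski inequality when $n=1$, yields the spectral bound
\[
\int f L_h f \le n\,\frac{\left(\int f\,\ell\,d\sigma\right)^2}{\int h\,\ell\,d\sigma}\quad\text{type}
\]
for admissible $f$ satisfying the Robin condition. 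This is used in the form of a weighted Poincar\'e inequality with weight $\ell/h$ against the measure $K^{-1}d\sigma$. The natural test functions are
\[
f_i=\langle\xi,E_i\rangle \ (i\le n),\qquad f_{n+1}=\text{(a combination of } \langle\xi,e\rangle \text{ and } \ell).
\]
They arise from the translation and scaling invariances, which are the frame adapted to the boundary. The $f_i$ with $i\le n$ satisfy Neumann/Robin compatibility, whereas the last mode vanishes on $\partial\Ccap$ (Dirichlet) rather than satisfying the Robin condition.

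\emph{Main obstacle.} The hard step is the Dirichlet mode. For the $n$ Neumann modes we can integrate by parts, and the boundary terms cancel thanks to the Robin condition shared with $h$. Applying the Alexandrov--Fenchel-based Poincar\'e inequality to $f_i$ together with the soliton equation gives the reverse inequality
\[
\sum_i\int \frac{h}{\ell}|\nabla f_i|^2\ \ge\ \dots
\]
For the Dirichlet mode I would instead derive a separate Minkowski/Pohozaev-type integral identity. This would come from integrating $\operatorname{div}$ of $\sigma_n^{ij}(h\nabla_j\ell-\ell\nabla_j h)$, whose boundary term has a sign when $\cos\theta>0$; this is exactly where acuteness enters. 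The identity should bound the $(n+1)$st mode by the opposite quantity. Summing all $n+1$ modes and using $\sum|\nabla f_i|^2=$ const then produces a reverse weighted Poincar\'e inequality for $u$:
\[
\int \ell\,|\nabla u|^2\,(\dots)\ \le 0.
\]
Equality throughout forces $\nabla u=0$. I expect the sign of the Dirichlet-mode boundary term, and its exact matching with the Neumann estimate so that the combined inequality is sharp, to be the delicate point.
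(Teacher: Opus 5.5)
\textbf{There is a genuine gap.} Your reduction is correct: you get $h\det A[h]=\ell$ with the Robin condition, and then $u=h/\ell$ with $\nabla_\mu u=0$. Your overall architecture also matches the paper's: sum a capillary local Alexandrov--Fenchel inequality over $n$ Neumann modes and one Dirichlet mode, with acuteness entering only in the Dirichlet mode. But both concrete ingredients you propose fail.

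\emph{The test functions.} Fixed functions of $\xi$ such as $\xi_\alpha$ and $\rho$, however normalized, are the trivial modes. The functions $\xi_\alpha/h$ are Neumann eigenfunctions of $\cL_h$ with eigenvalue $n$ and lie in the equality case of the local inequality. By the soliton equation, $h^2U^{ij}f_if_j=h\ell\,(A[h]^{-1})^{ij}f_if_j$. When $f$ is a fixed function of $\xi$, nothing converts $A[h]^{-1}$ into $A[h]$. Summing such modes therefore gives at best a lower bound on something like $\int (h/\ell)\tr(A[h]^{-1})$. That is the same direction as the trivial bound $\tr(A[h]^{-1})\ge n(h/\ell)^{1/n}$, with no $\nabla u$ term and no competing upper bound.

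The Ivaki--Milman mechanism requires test functions built from the soliton's own position vector $X=\nabla h+h(\xi-\cos\theta\,e)$. Since $\ip{X}{E_\alpha}$ is not Neumann in general, the paper uses
$F_\alpha=\ip{X}{E_\alpha-\cos\theta\,\ell^{-1}\xi_\alpha e}$ and $F_{n+1}=-\sin\theta\,\ell^{-1}\ip{X}{e}$. Their relative normalization is fixed by $\ell^2=\sin^2\theta\sum_\alpha\xi_\alpha^2+\rho^2$. Then $\nabla_\mu F_\alpha=0$ and $F_{n+1}=0$ on $\partial\Ccap$. Moreover $\sum_A F_A^2=\ell^2\norm{\nabla u}^2+\sin^2\theta\,u^2$ and $\sum_A dF_A\otimes dF_A=B^2$, where $B=A[h]-\zeta\sigma$ and $\zeta=\cos\theta\,\ell^{-1}\ip{X}{e}$. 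These identities turn the summed left-hand side into $\int h\ell\bigl(\tr A-2n\zeta+\zeta^2\tr A^{-1}\bigr)$. The identity $\int h\tr A=n\int u^2\ell-\int\ell^2\norm{\nabla u}^2$ is where $\nabla u$ enters, with the right sign.

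\emph{The Dirichlet mechanism.} In $\int\nabla_i\bigl(U^{ij}(h\nabla_j\ell-\ell\nabla_jh)\bigr)$, the block structure of $U$ along $\partial\Ccap$ reduces the boundary integrand to $U(\mu,\mu)(h\nabla_\mu\ell-\ell\nabla_\mu h)$. This vanishes identically because $h$ and $\ell$ satisfy the same Robin condition, so there is no sign to exploit. The interior gives only $\int h\tr U=n\int\ell\det A[h]$, i.e.\ the symmetry $V(\ell,h,\dots,h)=V(h,\ell,h,\dots,h)$. That holds for every capillary convex $h$ and carries no information about the soliton.

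What is needed is a Hardy-type ground-state substitution. For $f=0$ on $\partial\Ccap$, write $hf=v\rho$ and use $\nabla^2\rho+\rho\sigma=-\cos\theta\,\sigma$ to get
$\int h^2U^{ij}f_if_j-n\int f^2h\det A[h]=\int\rho^2U^{ij}v_iv_j+\cos\theta\int\rho^{-1}\tr U\,(hf)^2$. Acuteness enters through the sign of this \emph{interior} term, and the term is indispensable. Applied to $F_{n+1}=-\tan\theta\,\zeta$, it overcompensates $\zeta^2\tr A^{-1}$. The cross term $-2n\zeta$ can then be absorbed by completing the square, using $\tr A\,\tr(A^{-1})\ge n^2$ and $\sin^2\theta=\ell+\cos\theta\,\rho$.

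\emph{The endgame.} After bounding the means $\int F_\alpha\ell$ by Cauchy--Schwarz, what you get is the reverse inequality $\int\ell^2\norm{\nabla u}^2\le n\int(u-\bar u)^2\ell$, not $\nabla u=0$. Comparing it with the sharp Neumann Poincar\'e inequality gives $u=a+\sum_\alpha b_\alpha\xi_\alpha/\ell$. The equation then forces $a=1$ and $b_\alpha=0$.
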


Via the capillary Gauss map parametrization, Theorem~\ref{thm:classification} is reduced to a rigidity problem for the associated support function. After an appropriate normalization, the soliton equation \eqref{eq:geometric-soliton} becomes a Monge--Amp\`ere type equation on $\Ccap$ with a natural Robin boundary condition. The main analytic input is the following uniqueness theorem, which is in principle equivalent to Theorem \ref{thm:classification}.

\begin{theorem}
\label{prop:normalized}
Let $n\geq1$ and $\theta\in\bigl(0,\frac\pi2\bigr)$. Suppose
$h\in C^{\infty}(\Ccap)$ satisfies
$h>0$, $\nabla^2h+h\sigma>0$, and
\begin{eqnarray}
\label{eq:normalized-equation}
\left\{
\begin{array}{rlll}
  h\det(\nabla^2 h+h\sigma) &=& \ell & \text{in }\Ccap,\\
  \nabla_\mu h &=& \cot\theta\,h & \text{on }\partial\Ccap.
\end{array}
\right.
\end{eqnarray}
Then $h=\ell$.
\end{theorem}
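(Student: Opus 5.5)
We follow the Ivaki--Milman strategy, adapted to the capillary boundary as the introduction indicates. Set $u\coloneqq h/\ell$, the normalized support function, and write $A[h]\coloneqq\nabla^2h+h\sigma$. The goal is to show that $u$ is constant; the equation then gives $u^{n+1}=1$, hence $u\equiv1$ and $h=\ell$.

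The key point is that $u$ satisfies a homogeneous Neumann condition. Indeed, on $\partial\Ccap$ we have $\nabla_\mu\ell=\cot\theta\,\ell$, because $\ell$ itself solves \eqref{eq:normalized-equation}. Therefore
\[
\nabla_\mu u=\frac{\nabla_\mu h-u\nabla_\mu\ell}{\ell}=0 .
\]

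\textbf{Step 1: the capillary frame.} For $i=1,\dots,n$, the linear functions $\langle\xi,E_i\rangle$ generate the translations along $\partial\R^{n+1}_+$. They satisfy $A[\langle\xi,E_i\rangle]=0$, and their quotients $\langle\xi,E_i\rangle/\ell$ obey the Neumann condition; these give the $n$ Neumann modes. The remaining direction is $\langle\xi,e\rangle$, which is tied to $\ell$ through $\ell=\sin^2\theta+\cos\theta\langle\xi,e\rangle$. From it we build a mode vanishing on $\partial\Ccap$, namely $\langle\xi,e\rangle+\cos\theta$, which is the Dirichlet mode.

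\textbf{Step 2: the variational inequality.} We linearize the operator $h\mapsto\log\det A[h]$ at the solution $h$, using the mixed-discriminant (Hilbert) operator $L_h f\coloneqq\frac{1}{n}\,\mathrm{tr}\bigl(A[h]^{-1}A[f]\bigr)$. By the local capillary Alexandrov--Fenchel inequality \cite[Theorem~3.2]{MeiWangWengXia}, or Minkowski's inequality when $n=1$, the operator $L_h$, considered with respect to the measure $\det A[h]\,d\sigma$ and restricted to functions $f$ satisfying the Robin condition $\nabla_\mu f=\cot\theta f$, has its second eigenvalue $\le$ the trivial eigenvalue $1$. This yields a weighted Poincar\'e-type inequality
\[
\int f^2\det A\,d\sigma\cdot(\ldots)\le\int f\,L_hf\,\det A\,d\sigma
\]
for every $f$ that is orthogonal to the Neumann translation modes. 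We apply it with test function $f=h$, and also with $f=\ell$ corrected by a linear combination of the $\langle\xi,E_i\rangle$, using $p$-freedom, that is, translating so that the barycenter-type integrals $\int\langle\xi,E_i\rangle\,\ell\,h^{-1}\,d\sigma$ vanish.

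\textbf{Step 3: the Dirichlet identity.} Separately, we integrate the equation $h\det A=\ell$ against $\langle\xi,e\rangle+\cos\theta$ and use the divergence structure of the cofactor matrix of $A[h]$. Because this test function vanishes on $\partial\Ccap$, the boundary terms cancel. The result is an identity controlling the Dirichlet component of $u$.

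\textbf{Step 4: conclusion.} We combine Step 2 and Step 3 with the equation $\det A=\ell/h$. Following Ivaki--Milman, we compare the Minkowski-type integrals $\int h\,\ell\,d\sigma$, $\int\ell^2/h\,d\sigma$ and $\int\ell\,d\sigma$. This should lead to a reverse weighted Poincar\'e inequality for $u$ of the form
\[
\int\ell\,|\nabla u|^2_{A^{-1}}\,(\ldots)\le0 .
\]
Equality then forces $\nabla u=0$, so $u$ is constant and $h=\ell$.

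\textbf{Main obstacle.} The hard step is Step 3 together with the bookkeeping in Step 4. The Dirichlet mode is not orthogonal to the constants, so it lies outside the range where the Alexandrov--Fenchel inequality applies. Its contribution has to be shown to have the right sign, and I expect this to use $\cos\theta>0$. This is precisely where the acute-angle assumption should enter, consistent with Remark~\ref{rem:acute-angle-necessary}. My first attempt would be to write $u-1$ as its projection onto the span of $\{\langle\xi,E_i\rangle/\ell\}$ plus a remainder, and then bound the cross term with the Dirichlet mode by the identity from Step 3.
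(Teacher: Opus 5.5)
Your outline has the right architecture: $u=h/\ell$ with $\nabla_\mu u=0$, a Neumann/Dirichlet split, and $\cos\theta>0$ entering through the Dirichlet part. But the central construction is missing, and your Steps~2--3 do not produce an inequality for $u$.

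\emph{The missing idea.} In the paper (and in Ivaki--Milman, whose mechanism is exactly this), the sharp Poincar\'e inequalities are applied to coordinate functions of the inverse Gauss map, and the results are summed via completeness identities, cf.\ \eqref{eqn:ivaki-milman-identities}. Here the test functions are the $n+1$ boundary-adapted coordinates of $X=\nabla h+h(\xi-\cos\theta\,e)$, namely $F_\alpha=\ip{X}{E_\alpha-\cos\theta\,\ell^{-1}\xi_\alpha e}$ and $F_{n+1}=-\sin\theta\,\ell^{-1}\ip{X}{e}$. They satisfy $\sum_AF_A^2=\ell^2|\nabla u|^2+\sin^2\theta\,u^2$ and $\sum_A dF_A\otimes dF_A=B^2$, with $B=A[h]-\zeta\sigma$. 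The correction in $F_\alpha$, together with \eqref{eq:block-boundary}, gives $\nabla_\mu F_\alpha=0$, so Lemma~\ref{lem:local-af} applies. The Robin condition for $h$ gives $\ip{X}{e}=0$ on $\partial\Ccap$, so $F_{n+1}$ is a Dirichlet function.

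\emph{Step~2.} Testing with $h$ gives a tautology. Testing with $\ell$ minus horizontal linear functions does not help either: since $A[\xi_\alpha]=0$, the correction drops out, and you only get the scalar inequality $\bigl(\int\ell^2h^{-1}\dd\bigr)^2\ge\frac1n\int\ell\dd\int\ell^2h^{-1}\tr(A^{-1})\dd$. That is the classical mixed-volume/H\"older route to $L_p$-Minkowski uniqueness, which does not reach the logarithmic endpoint. Nothing in your outline turns such scalars into a functional inequality for $\nabla u$.

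Three further points in Step~2 are off.
\begin{itemize}
\item ``Second eigenvalue $\le 1$'' is vacuous. The Alexandrov--Fenchel content is that the form is $\le0$ on the $V(\cdot,h,\ldots,h)$-orthogonal complement of $h$ (not of the translations), with the $\xi_\alpha$ in its kernel.
\item There is no translation freedom, since $h+\ip{b}{\xi}$ no longer solves $h\det A[h]=\ell$.
\item The integrals you want to normalize already vanish: $\int\xi_\alpha\ell h^{-1}\dd=(n+1)V(\xi_\alpha,h,\ldots,h)=0$. The genuinely uncentered quantities are $m_\alpha=\int F_\alpha\ell\dd$. The paper keeps them and bounds $|m|^2/M$ by Cauchy--Schwarz and \eqref{eq:T-zero-completeness} with $c=\bar u$.
\end{itemize}

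\emph{Step~3.} Your Dirichlet mode is wrong. On $\partial\Ccap$ one has $\ip{\xi}{e}=0$, so $\ip{\xi}{e}+\cos\theta\equiv\cos\theta$ there. The boundary-defining function is $\rho=-\ip{\xi}{e}$; you seem to have conflated $\xi$ with $\nu=\xi-\cos\theta\,e$.

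More importantly, integrating the equation against one fixed function yields a single scalar identity, which cannot control a Dirichlet component of $u$. What is needed is a Poincar\'e inequality for every $f$ vanishing on $\partial\Ccap$. Writing $hf=\rho v$ and using $\nabla^2\rho+\rho\sigma=-\cos\theta\,\sigma$ gives the Hardy-type identity of Lemma~\ref{lem:hardy}, with the extra term $\cos\theta\int\frac{\tr U}{\rho}(hf)^2\dd\ge0$.

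This term is not merely shown to have a sign and discarded. Apply the lemma to $F_{n+1}=-\tan\theta\,\zeta$ and use $U=\frac{\ell}{h}A^{-1}$. The extra term then makes the net coefficient of $\zeta^2$ equal to $-\frac{\ell\tr(A^{-1})}{\cos\theta\,\rho}<0$. This lets you absorb the cross term $-2n\zeta$ from $h^2U^{ij}B_{ik}B_j{}^k$ by completing the square and using $\tr A\,\tr(A^{-1})\ge n^2$; see \eqref{eq:square-completion}. Together with \eqref{eq:trace-identity}, this gives the reverse inequality \eqref{eq:reverse-poincare}.

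\emph{Conclusion.} Rigidity does not come from ``$\nabla u=0$'' directly. Inequality \eqref{eq:reverse-poincare} forces equality in Corollary~\ref{cor:weighted-poincare-neumann}, which only gives $u=a+\sum_\alpha b_\alpha\xi_\alpha/\ell$. The equation $h\det A[h]=\ell$ is then needed to get $a=1$ and $b_\alpha=0$.
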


Theorem~\ref{prop:normalized} establishes uniqueness for the capillary logarithmic Minkowski problem. Since $\ell$ is nonconstant, this is a genuinely anisotropic rigidity statement.

Combining \cite[Theorem~1.1]{MeiWangWeng} with Theorem~\ref{thm:classification} yields convergence of the capillary Gauss curvature flow, after a suitable rescaling, to a spherical cap. More precisely, we have the following corollary.
\begin{corollary}
Let $n\geq1$, and let $\Sigma_0\subset\overline{\R^{n+1}_{+}}$ be a smooth strictly convex capillary hypersurface with contact angle $\theta\in\bigl(0,\frac\pi2\bigr)$. Set
$\widetilde\nu\coloneqq\nu+\cos\theta\,e$. Then the capillary Gauss
curvature flow is
\begin{eqnarray*}
  \partial_t X(\cdot,t) = -K(\cdot,t)\,\widetilde\nu(\cdot,t).
\end{eqnarray*}
The flow with initial hypersurface $\Sigma_0$ becomes extinct in finite time
$T^*<\infty$. Moreover, after a suitable rescaling, the rescaled
hypersurfaces converge to a spherical cap as the rescaled time tends to
infinity.
\end{corollary}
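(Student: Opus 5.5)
The corollary follows by combining two results. The first is the convergence theorem \cite[Theorem~1.1]{MeiWangWeng} for the capillary Gauss curvature flow, which we take as given in the form proved there. The second is the rigidity statement, Theorem~\ref{thm:classification}. The plan is to quote the flow-side facts from \cite{MeiWangWeng}, identify the limiting object produced there with a solution of \eqref{eq:geometric-soliton}, and then upgrade subsequential convergence to full convergence using uniqueness.

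\emph{Step 1: finite-time extinction and rescaling.} From \cite[Theorem~1.1]{MeiWangWeng} we take the following facts for the flow $\partial_tX=-K\widetilde\nu$ starting from a smooth strictly convex capillary $\Sigma_0$ with $\theta\in(0,\frac\pi2)$:
\begin{itemize}
\item the solution stays smooth, strictly convex and capillary;
\item it becomes extinct at a finite time $T^*<\infty$;
\item it shrinks to a point $p\in\partial\R^{n+1}_+$.
\end{itemize}
Finiteness of $T^*$ can also be seen directly: the enclosed volume decreases at a rate bounded below. Indeed, $\langle\widetilde\nu,\nu\rangle=1+\cos\theta\langle\nu,e\rangle$, and integrating $K$ against this weight over the Gauss image $\Ccap$ gives a positive constant.

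We then introduce the rescaled hypersurfaces
\[
\widetilde X=\bigl((n+1)(T^*-t)\bigr)^{-1/(n+1)}(X-p),
\qquad \tau=-\tfrac{1}{n+1}\log(T^*-t).
\]
Also from \cite[Theorem~1.1]{MeiWangWeng}, these have uniform $C^\infty$ bounds and uniform pinching, and every sequence $\tau_j\to\infty$ has a subsequence along which $\widetilde\Sigma_{\tau_j}$ converges smoothly. The limit is a smooth strictly convex capillary hypersurface, and it is a self-similar solution of the flow.

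\emph{Step 2: identify the limit as a capillary Gauss soliton.} A self-similar shrinking solution is stationary for the rescaled flow. Only the normal component of the velocity is geometric; the tangential part is absorbed by a capillary-preserving reparametrization. Taking the normal component, the stationarity condition reads
\[
K\,\bigl(1+\cos\theta\langle\nu,e\rangle\bigr)=c\,\langle X-p,\nu\rangle .
\]
This is \eqref{eq:geometric-soliton} with $\lambda=c>0$, where $c$ is fixed by the rescaling normalization. The point $p$ lies on $\partial\R^{n+1}_+$ because the extinction point lies on the supporting hyperplane.

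\emph{Step 3: apply rigidity and pass to full convergence.} Theorem~\ref{thm:classification} applies to every subsequential limit and forces it to equal $p+\lambda^{-1/(n+1)}\Ccap$. Here $p$ is the fixed extinction point and $\lambda$ is the fixed constant from the normalization. Hence all subsequential limits coincide. A standard compactness argument then shows that the whole family $\widetilde\Sigma_\tau$ converges smoothly to this spherical cap as $\tau\to\infty$.

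\emph{Main obstacle.} The step I expect to need the most care is Step 2. One must check that the limiting equation delivered by \cite{MeiWangWeng} is exactly \eqref{eq:geometric-soliton}, with the denominator $1+\cos\theta\langle\nu,e\rangle$ and a center $p$ on the boundary. If \cite{MeiWangWeng} instead states the limit in support-function form on $\Ccap$, then the matching goes through \eqref{eq:normalized-equation}. In that case the equivalence between Theorem~\ref{prop:normalized} and Theorem~\ref{thm:classification}, via the capillary Gauss map parametrization, plays the same role.
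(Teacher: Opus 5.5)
Your proposal is correct and follows the paper's route: the corollary combines the flow result \cite[Theorem~1.1]{MeiWangWeng} (finite-time extinction and smooth convergence of the rescaled flow to a capillary Gauss soliton) with the rigidity Theorem~\ref{thm:classification}. Your added details are sound, namely the volume identity $\frac{d}{dt}\mathrm{Vol}=-\int_{\Ccap}\ell\,d\sigma$, the normalization, and the argument that uniqueness of subsequential limits gives convergence of the whole family. One cosmetic correction: in the rescaled coordinates $\widetilde X$ the stationary equation holds with center $0$ and $\lambda=1$, so every subsequential limit is $\Ccap$ itself rather than $p+\lambda^{-1/(n+1)}\Ccap$.
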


The main part of the paper is devoted to the proof of Theorem~\ref{prop:normalized}. Once this rigidity result is established, Theorem~\ref{thm:classification} follows by applying it to the support function associated with \eqref{eq:geometric-soliton}. Our argument is inspired by the proof of Ivaki--Milman \cite{IvakiMilman} in the closed setting. The presence of a capillary boundary, however, introduces several new features that require additional arguments. To highlight these differences, we first recall their proof and then outline our argument in the capillary setting.

\bigskip

\noindent{\bf The approach of Ivaki--Milman \cite{IvakiMilman}.}
Let $\Sigma\subset \R^{n+1}$ be a smooth strictly convex closed hypersurface with support function $h$ on $\Sph^{n}$. Its inverse Gauss map can be written as
\[
  X_h\coloneqq h(x)x+\nabla h(x),
  \qquad x\in\Sph^{n}.
\]
Set $A[h]\coloneqq \nabla^{2}h+h\sigma$, let $U^{ij}$ denote the cofactor
tensor of $A[h]$, and write
$dV_h\coloneqq h\det A[h]\,d\sigma$. From the classical 
Alexandrov--Fenchel inequality, Andrews \cite{andrews1997} derived the following local
Brunn--Minkowski (Poincar\'e-type) inequality: 
\begin{equation}\label{eq:closed-spectral-intro}
 \int_{\Sph^{n}} h^{2}U^{ij} f_i f_j\,d\sigma\geq  n\int_{\Sph^{n}} f^{2} dV_h, 
  \qquad \int_{\Sph^{n}} f\,dV_h=0.
\end{equation}
In \cite{IvakiMilman}, Ivaki--Milman apply \eqref{eq:closed-spectral-intro} to the normalized coordinate functions
\begin{eqnarray*}
  \langle X_h,E_A\rangle-
\frac{\int_{\mathbb S^n}\langle X_h,E_A\rangle dV_h}
{\int_{\mathbb S^n}dV_h}, \qquad 1\leq A\leq n+1.
\end{eqnarray*}
Summing over $A$ and invoking the following identities
\begin{equation}\label{eqn:ivaki-milman-identities}
  \sum_{A=1}^{n+1}\langle X_h,E_A\rangle^{2}
  = h^{2}+|\nabla h|^{2},
  \qquad
  \sum_{A=1}^{n+1} d\langle X_h,E_A\rangle\otimes d\langle X_h,E_A\rangle
  = A[h]^{2},
\end{equation}
they obtain a sharp reverse Poincar\'e  inequality for the solution of the Gauss soliton. Combined with the sharp Poincar\'e inequality on $\Sph^{n}$, this yields the uniqueness result in the closed setting.

\bigskip

\noindent{\bf Modification for the capillary setting.}
In the capillary setting, the boundary breaks the ambient symmetry. Set
$u\coloneqq h/\ell$ and
$\xi_\alpha\coloneqq\langle\xi,E_\alpha\rangle$ for
$1\leq\alpha\leq n$. We replace the standard coordinate functions by the
boundary-adapted quantities
\begin{eqnarray}\label{eq:F-horizontal}
  F_{\alpha}
  \coloneqq \left\langle X,\,E_{\alpha}-\cos\theta\,\ell^{-1}\xi_{\alpha}e\right\rangle,
~~1\le \alpha\le n,~~~
  F_{n+1}
  \coloneqq -\sin\theta\,\ell^{-1}\langle X,e\rangle.
\end{eqnarray}

For the functions $F_\alpha$ $(1\le \alpha\le n)$, we apply the Poincar\'e inequality  with the Neumann boundary condition  
\begin{equation}
  \int_{\Ccap}h^2U^{ij}f_i f_j\dd
  \ge n
  \int_{\Ccap}f^2\ell  \dd, \quad \int_{\Ccap}f\ell d\sigma=0, \qquad \nabla_\mu f=0 \text{ on } \partial
  \Ccap,
  \label{eq:local-af2}
\end{equation}
to the modified functions $$ F_\alpha- \frac{\int_{\Ccap} F_\alpha \ell \dd}{\int_{\Ccap} \ell \dd},\qquad 1\leq \alpha\leq n,$$
which is established in Lemma~\ref{lem:local-af} (see Theorem~\ref{thm-mwwx}; for $n\geq2$, see also \cite{MeiWangWengXia}).
The remaining function $F_{n+1}$ is treated instead by the  Poincar\'e inequality with the Dirichlet boundary condition, \eqref{eq:Dirichlet},  proved in Lemma~\ref{lem:hardy}
\begin{equation}\label{eq:Dirichlet_0}
  \int_{\Ccap} h^{2}U^{ij}f_i f_j\,\dd
  \ge n\int_{\Ccap} f^{2}\ell \,\dd
+ \cos\theta\int_{\Ccap}\frac{\tr(U)}{\rho} (hf)^{2}\,\dd, \quad f=0 \hbox{ on }\partial\Ccap.
\end{equation}
Together with the identities
\begin{equation*}
\sum_{A=1}^{n+1}F_A^{2}
=\ell^{2}\norm{\nabla u}^{2}+\sin^2\theta u^{2}, \qquad \sum_{A=1}^{n+1}d(F_A) \otimes d(F_A)=B^2,
\end{equation*}
where $B$ is defined by
\[
  B\coloneqq A[h]- \ell^{-1} {\cos\theta}\,
  \langle X,e\rangle\,\sigma,
\]
we obtain the reverse weighted Poincar\'e inequality \eqref{eq:reverse-poincare}. Combining this with the sharp Neumann Poincar\'e inequality \eqref{eq:model-poincare} yields the desired uniqueness. We remark that the last term in \eqref{eq:Dirichlet_0} requires special care and will be controlled using the positivity of $\cos\theta$. Thus, the assumption $\theta<\frac{\pi}{2}$ is essential to the proof. 

This Neumann--Dirichlet split is the key new feature relative to the closed case. In the boundaryless setting, the $n+1$ coordinate functions enter symmetrically and can be treated by using a single sharp inequality.
In contrast, in the capillary setting the fixed supporting hyperplane singles out one vertical direction, leaving $n$ horizontal translation directions. Accordingly, the (modified) horizontal coordinate functions satisfy Neumann boundary conditions, whereas the vertical coordinate function satisfies a Dirichlet boundary condition.
The corresponding Neumann Poincar\'e inequality \eqref{eq:local-af} follows
from \cite{MeiWangWengXia} when $n\geq2$, and from the classical planar
Minkowski inequality when $n=1$, while its Dirichlet counterpart,
\eqref{eq:Dirichlet}, is proved in Lemma~\ref{lem:hardy}. Both inequalities,
especially the Dirichlet one, precisely capture the effects of the capillary boundary
and lead to the rigidity result.

\bigskip

\noindent\textit{Organization of the rest of the paper.} In Section~\ref{sec2}, we introduce the capillary Gauss map parametrization and develop the associated capillary frame. In Section~\ref {sec:3-spectral}, we establish 
sharp Poincar\'e inequalities for the Neumann boundary and for the Dirichlet boundary. 
Finally, in Section~\ref{sec4}, we combine these inequalities to complete the proof of Theorem~\ref{prop:normalized}, from which Theorem~\ref{thm:classification} follows immediately.

\section{Capillary hypersurfaces}\label{sec2}

Let $\Sigma\subset\overline{\R^{n+1}_{+}}$ ($n\geq1$) be a smooth strictly convex capillary hypersurface with contact angle $\theta\in(0,\pi)$. The capillary Gauss map
\[
  \widetilde\nu\coloneqq \nu+\cos\theta\,e:
  \Sigma\to\Ccap
\]
is a diffeomorphism. For $n\geq2$, this follows from \cite[Lemma~2.2]{MeiWangWengXia}. When $n=1$, it follows directly from the normal-angle parametrization: positive curvature makes the outward normal angle strictly monotone along the arc, and the contact-angle condition gives the endpoint values $-\theta$ and $\theta$. By \cite[Lemma~2.4]{MeiWangWengXia},
using the inverse capillary Gauss map parametrization, the support function is
\begin{equation*}
h(\xi)
  = \langle X(\xi),\nu(X(\xi))\rangle
  = \langle X(\xi),\xi-\cos\theta\,e\rangle.
\end{equation*}
It satisfies the Robin boundary condition
\begin{equation}\label{eq:robin}
  \nabla_{\mu}h=\cot\theta\,h
  \qquad\text{on }\partial\Ccap.
\end{equation}
For $f\in C^{2}(\Ccap)$, we set
\begin{equation*}
  A[f]\coloneqq \nabla^{2}f+f\sigma.
\end{equation*}
The inverse capillary Gauss map parametrization and Gauss curvature are
\begin{equation}\label{eq:inverse-gauss}
  X=\nabla h(\xi)+h(\xi)(\xi-\cos\theta\,e),
  \qquad
  K=(\det A[h])^{-1}.
\end{equation}
Here and below, determinants and traces are taken with respect to $\sigma$.
We adopt the Einstein summation convention for repeated indices and raise
and lower indices with $\sigma$. Thus, for a symmetric $(0,2)$-tensor $B$,
we write
\[
  B_j{}^k\coloneqq \sigma^{k\ell}B_{j\ell},
  \qquad
  (B^2)_{ij}\coloneqq B_i{}^kB_{kj}.
\]

\subsection{The standard model}
Recall that the support function of the unit spherical cap $\Ccap$ is
\begin{equation*} 
  \ell(\xi)\coloneqq \sin^{2}\theta+\cos\theta\,\langle\xi,e\rangle.
\end{equation*}
It satisfies the same Robin boundary condition as $h$ and
\begin{equation*} 
  A[\ell]=\sigma.
\end{equation*}
We introduce the horizontal coordinate functions and the boundary defining function
\begin{equation*}
  \xi_{\alpha}\coloneqq \langle\xi,E_{\alpha}\rangle,
  \quad 1\le \alpha\le n,
  \qquad
  \rho\coloneqq -\langle\xi,e\rangle=\xi_{n+1}.
\end{equation*}
Then
\begin{equation}\label{eq:ell-rho}
  \ell=\sin^{2}\theta-\cos\theta\,\rho,
  \qquad
  \sin^{2}\theta=\ell+\cos\theta\,\rho,
\end{equation}
and the defining equation of $\Ccap$ gives
\begin{equation}\label{eq:sphere-coordinate-identity}
  \sum_{\alpha=1}^{n}\xi_{\alpha}^{2}+(\rho+\cos\theta)^{2}=1.
\end{equation}
Spherical differentiation yields
\begin{equation}\label{eq:model-hessians}
  A[\xi_{\alpha}]=0,
  \qquad
  A[\rho]=-\cos\theta\,\sigma,
\end{equation}
and on $\partial\Ccap$,
\begin{equation}\label{qeq:robin-bry}
\nabla_{\mu}\xi_{\alpha}=\cot\theta\,\xi_{\alpha},
  \qquad
  \rho=0,
  \qquad
  \nabla_{\mu}\rho=-\sin\theta.
\end{equation}

\medskip

\noindent\textit{The one-dimensional case.}
When $n=1$, we use the normal-angle parametrization
\[
  \Ccap
  = \left\{\xi(s)\coloneqq (\sin s,\cos s-\cos\theta):-\theta\leq s\leq\theta\right\}.
\]
Then the metric, area element, horizontal and vertical coordinates reduce to
\[
  \sigma=ds^2,\qquad d\sigma=ds,\qquad
  \xi_1=\sin s,\qquad
  \rho=\cos s-\cos\theta.
\]
Moreover,
\[
  \ell=1-\cos\theta\cos s,
  \qquad A[h]=h''+h.
\]
Since the outward conormal is $\mu=-\partial_s$ at $s=-\theta$ and
$\mu=\partial_s$ at $s=\theta$, the Robin boundary condition \eqref{eq:robin} is
\begin{eqnarray}
      h'(-\theta)=-\cot\theta\,h(-\theta),
  \qquad
  h'(\theta)=\cot\theta\,h(\theta).
\end{eqnarray}
Thus the tensorial formulas below reduce to their scalar counterparts when
$n=1$.

\medskip
For later use in every dimension, write
$\Delta\coloneqq\tr_{\sigma}\nabla^2$ for the spherical Laplacian. Since
$\nabla^{2}\xi_{\alpha}=-\xi_{\alpha}\,\sigma$ and
$\nabla^{2}\rho+\rho\,\sigma=-\cos\theta\,\sigma$, taking traces gives
\begin{equation}\label{eq:model-laplacians}
  \Delta\xi_{\alpha}=-n\,\xi_{\alpha},
  \qquad
  \Delta\rho=-n(\rho+\cos\theta).
\end{equation}
Thus each $\xi_\alpha$ is a Robin eigenfunction with eigenvalue $n$,
whereas $\rho$ is not a Dirichlet eigenfunction unless
$\theta=\pi/2$.

\subsection{The capillary soliton}\label{subsec:2.2}
Define the cofactor tensor of $A[h]$ by
\begin{equation}\label{eq:U-def}
  U^{ij}
  \coloneqq \frac{\partial\det A[h]}{\partial A[h]_{ij}}
  =\det A[h]\,(A[h]^{-1})^{ij}.
\end{equation}
When $n=1$, $A[h]$ is scalar and \eqref{eq:U-def} reduces to $U^{11}=1$, so \eqref{eq:U-div-free} below is automatic.
Since $A[h]$ is Codazzi, $U$ is divergence-free, that is,
\begin{equation}\label{eq:U-div-free}
  \sum\limits_{i=1}^{n}\nabla_iU^{ij}=0.
\end{equation}
Tangential differentiation of \eqref{eq:robin} yields the boundary block condition
\begin{equation}\label{eq:block-boundary}
  A[h](\mu,\eta)=0
  \qquad\text{on }\partial\Ccap,
  \quad \eta\in T\partial\Ccap.
\end{equation}
Consequently, $A[h]^{-1}$ and $U$ have the same tangential/normal block decomposition along $\partial\Ccap$.
When $n=1$, $T\partial\Ccap=\{0\}$, so \eqref{eq:block-boundary} is vacuous; the asserted block decomposition is automatic.

From now on, assume that \(h\) is a solution of Eq. \eqref{eq:normalized-equation}. 
We now normalize it by the model solution and set
\begin{equation}\label{eqn:capillary-support-function}
  u\coloneqq \frac{h}{\ell}.
\end{equation}
Eq. \eqref{eq:normalized-equation} implies that $u$ satisfies
\begin{equation*} 
\left\{
\begin{array}{rlll}\vspace{2mm}
  \det(A[h]) &=& \frac{1}{u} & \text{in }\Ccap,\\
  \nabla_{\mu}u &=& 0 & \text{on }\partial\Ccap,
\end{array}
\right.
\end{equation*}
where
\begin{equation}\label{eq:A-h-u}
  A[h]=\ell\nabla^{2}u+d\ell\otimes du+du\otimes d\ell+u\sigma.
\end{equation}
The quotient $u$ offers two advantages: the model solution becomes $u\equiv 1$, and the boundary condition becomes a homogeneous Neumann condition.

\begin{lemma}
For $h=\ell u$ with $\nabla_{\mu}u=0$ on $\partial\Ccap$, we have
\begin{equation}\label{eq:trace-identity}
  \int_{\Ccap} h\,\tr A[h]\,d\sigma
  = n\int_{\Ccap} u^{2}\ell\,d\sigma
  - \int_{\Ccap} \ell^{2}|\nabla u|^{2}\,d\sigma.
\end{equation}
\end{lemma}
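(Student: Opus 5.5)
We compute $\tr A[h]=\Delta h+nh$, so that
\[
\int_{\Ccap} h\,\tr A[h]\,\dd=\int_{\Ccap}\bigl(h\Delta h+nh^2\bigr)\dd .
\]
Rather than integrating $h\Delta h$ by parts directly, which produces the boundary term $\int_{\partial\Ccap}h\nabla_\mu h=\cot\theta\int_{\partial\Ccap}h^2$, we work with the formula \eqref{eq:A-h-u} for $A[h]$ in terms of $u$. Taking its trace gives
\[
\tr A[h]=\ell\Delta u+2\langle\nabla\ell,\nabla u\rangle+nu .
\]
Multiplying by $h=\ell u$ yields
\[
h\,\tr A[h]=\ell^2u\Delta u+2\ell u\langle\nabla\ell,\nabla u\rangle+nu^2\ell .
\]
The last term already matches the first term on the right-hand side of \eqref{eq:trace-identity}.

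For the first term we integrate by parts, using the Neumann condition $\nabla_\mu u=0$ on $\partial\Ccap$ so that no boundary term appears:
\[
\int_{\Ccap}\ell^2u\Delta u\,\dd=-\int_{\Ccap}\ell^2|\nabla u|^2\dd-\int_{\Ccap}2\ell u\langle\nabla\ell,\nabla u\rangle\,\dd .
\]
The second piece here exactly cancels the cross term $2\ell u\langle\nabla\ell,\nabla u\rangle$. This leaves
\[
\int_{\Ccap}h\,\tr A[h]\,\dd=n\int_{\Ccap}u^2\ell\,\dd-\int_{\Ccap}\ell^2|\nabla u|^2\dd .
\]

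Two checks remain. First, the trace of $d\ell\otimes du+du\otimes d\ell$ is $2\langle\nabla\ell,\nabla u\rangle$, and $A[\ell]=\sigma$ is already built into \eqref{eq:A-h-u}; that formula is just the product rule $\nabla^2(\ell u)=u\nabla^2\ell+\ell\nabla^2u+d\ell\otimes du+du\otimes d\ell$ combined with $\nabla^2\ell+\ell\sigma=\sigma$. Second, the only boundary term arising is $\int_{\partial\Ccap}\ell^2u\nabla_\mu u=0$. For $n=1$ the same computation holds with $\Delta=d^2/ds^2$, and the boundary contributions at $s=\pm\theta$ vanish because $u'(\pm\theta)=0$. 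No step is a real obstacle; the only care needed is to use the $u$-form of $A[h]$ so that the cross terms cancel and the boundary integral vanishes.
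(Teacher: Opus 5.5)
Your proposal is correct and follows the paper's proof exactly: take the trace of the $u$-form \eqref{eq:A-h-u} of $A[h]$, multiply by $h=\ell u$, and integrate $\ell^2u\Delta u$ by parts. The Neumann condition kills the boundary term, and the resulting cross term cancels $2\ell u\langle\nabla\ell,\nabla u\rangle$.
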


\begin{proof}
Taking trace in \eqref{eq:A-h-u} w.r.t. $\sigma$ gives
\[
  \tr A[h]=\ell\Delta u+2\langle\nabla\ell,\nabla u\rangle+nu.
\]
Multiplying by $h=\ell u$, integrating by parts, and using the fact $\nabla_{\mu}u=0$, we obtain \eqref{eq:trace-identity}.
\end{proof}

\subsection{The capillary frame}
The capillary boundary conditions determine the basic scalar modes. Since $\xi_{\alpha}$ and $\ell$ satisfy the same Robin boundary condition, while $\rho$ vanishes on the boundary, we have
\begin{equation}\label{eq:basic-boundary-quotients}
  \nabla_{\mu}\left(\ell^{-1}{\xi_{\alpha}}\right)=0,
  \quad 1\le \alpha\le n,
  \qquad
 \ell^{-1}  {\rho}=0
  \quad\text{on }\partial\Ccap.
\end{equation}
The remaining requirement is the coordinate completeness used in the closed proof. Combining \eqref{eq:ell-rho} and \eqref{eq:sphere-coordinate-identity}, we obtain 
\begin{equation}\label{eq:ell-coordinate-identity}
  \ell^{2}=\sin^{2}\theta\sum_{\alpha=1}^{n}\xi_{\alpha}^{2}+\rho^{2}.
\end{equation}
This identity determines the relative normalization of the horizontal and vertical modes.

Define
\begin{equation}\label{eq:omega-def}
 \omega_{\alpha}\coloneqq\frac{\sin\theta\xi_{\alpha}}{\ell},
 \quad 1\leq\alpha\leq n,
 \qquad
 \omega_{n+1}\coloneqq\frac{\rho}{\ell},
\end{equation}
and write $\omega=(\omega_1,\ldots,\omega_{n+1})$.

\begin{lemma}
The functions in \eqref{eq:omega-def} satisfy
\begin{equation}\label{eq:omega-completeness}
 \sum_{A=1}^{n+1}\omega_A^{2}=1,
 \qquad
 \sum_{A=1}^{n+1}\nabla_i\omega_A\nabla_j\omega_A
 =\frac{\sin^{2}\theta}{\ell^{2}}\sigma_{ij},
\end{equation}
and
\begin{equation}\label{eq:omega-orthogonality}
 \sum_{A=1}^{n+1}\omega_A\nabla\omega_A=0.
\end{equation}
On the boundary $\partial\Ccap$,
\begin{equation}\label{eq:omega-boundary}
 \nabla_{\mu}\omega_{\alpha}=0,
 \quad 1\leq\alpha\leq n,
 \qquad
 \omega_{n+1}=0.
\end{equation}
\end{lemma}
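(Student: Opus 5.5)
The plan is to derive all three identities from the single algebraic relation \eqref{eq:ell-coordinate-identity}, and to read off the boundary conditions from \eqref{eq:basic-boundary-quotients}.

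\textbf{Step 1: the first identity in \eqref{eq:omega-completeness}.} Squaring and summing the definitions \eqref{eq:omega-def} gives
\[
\sum_A\omega_A^2=\ell^{-2}\Bigl(\sin^2\theta\sum_\alpha\xi_\alpha^2+\rho^2\Bigr)=1,
\]
by \eqref{eq:ell-coordinate-identity}.

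\textbf{Step 2: the orthogonality \eqref{eq:omega-orthogonality}.} Differentiating $\sum_A\omega_A^2=1$ immediately yields $\sum_A\omega_A\nabla\omega_A=0$.

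\textbf{Step 3: the second identity in \eqref{eq:omega-completeness}.} This is the only real computation. Write $\omega_A=\phi_A/\ell$ with $\phi=(\sin\theta\,\xi_1,\dots,\sin\theta\,\xi_n,\rho)$, so $|\phi|^2=\ell^2$. Then
\[
\nabla\omega_A=\ell^{-1}\nabla\phi_A-\ell^{-2}\phi_A\nabla\ell .
\]
Summing the tensor products over $A$ gives
\[
\sum_A\nabla_i\omega_A\nabla_j\omega_A=\ell^{-2}\sum_A\nabla_i\phi_A\nabla_j\phi_A-\ell^{-4}\,\ell^2\,\nabla_i\ell\nabla_j\ell .
\]
Here the cross terms were simplified using $\sum_A\phi_A\nabla\phi_A=\tfrac12\nabla|\phi|^2=\ell\nabla\ell$. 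They contribute $-2\ell^{-3}\cdot\ell\,\nabla_i\ell\nabla_j\ell$, and the last term contributes $+\ell^{-4}\ell^2\nabla_i\ell\nabla_j\ell$, so the net is $-\ell^{-2}\nabla_i\ell\nabla_j\ell$.

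Next compute $\sum_A\nabla_i\phi_A\nabla_j\phi_A$. View $\xi$ as the unit sphere centered at $\cos\theta\,e$, so $\nabla_i\xi$ is the ambient tangent vector $\partial_i\xi$. The full coordinate gradients satisfy $\sum_{B=1}^{n+1}\nabla_i\xi_B\nabla_j\xi_B=\sigma_{ij}$, and $\nabla\rho=\nabla\xi_{n+1}$. Hence
\[
\sum_A\nabla_i\phi_A\nabla_j\phi_A=\sin^2\theta\,(\sigma_{ij}-\nabla_i\rho\nabla_j\rho)+\nabla_i\rho\nabla_j\rho=\sin^2\theta\,\sigma_{ij}+\cos^2\theta\,\nabla_i\rho\nabla_j\rho .
\]
Finally, $\nabla\ell=-\cos\theta\nabla\rho$ by \eqref{eq:ell-rho}, so $\cos^2\theta\,\nabla_i\rho\nabla_j\rho=\nabla_i\ell\nabla_j\ell$. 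This cancels the $-\ell^{-2}\nabla\ell\otimes\nabla\ell$ term and leaves $\ell^{-2}\sin^2\theta\,\sigma_{ij}$, as claimed.

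\textbf{Step 4: the boundary conditions \eqref{eq:omega-boundary}.} These follow directly from \eqref{eq:basic-boundary-quotients}, since $\omega_\alpha=\sin\theta\,\xi_\alpha/\ell$ and $\omega_{n+1}=\rho/\ell$.

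The only point needing care is the bookkeeping in Step 3. That includes the identity $\sum_B d\xi_B\otimes d\xi_B=\sigma$ for ambient coordinates restricted to the sphere; it holds because the translation by $\cos\theta\,e$ does not affect differentials. For $n=1$ everything reduces to scalar identities in $s$ and can be checked directly.
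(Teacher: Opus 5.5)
Your proof is correct and follows the paper's argument. The first identity and the orthogonality relation come from \eqref{eq:ell-coordinate-identity}, the boundary conditions come from \eqref{eq:basic-boundary-quotients}, and the gradient identity is a direct differentiation using $d\ell=-\cos\theta\,d\rho$ and $\sum_B d\xi_B\otimes d\xi_B=\sigma$. The only difference is cosmetic: you simplify the cross terms with $\sum_A\phi_A\,d\phi_A=\ell\,d\ell$, the differential of \eqref{eq:ell-coordinate-identity}, where the paper uses $\sum_\alpha\xi_\alpha\,d\xi_\alpha=-(\rho+\cos\theta)\,d\rho$, and the two are equivalent.
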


\begin{proof}
The first identity in \eqref{eq:omega-completeness} is exactly \eqref{eq:ell-coordinate-identity} divided by $\ell^{2}$, and \eqref{eq:omega-orthogonality} is its differential. For the second identity, differentiate \eqref{eq:omega-def} and use
\[
 d\ell=-\cos\theta\,d\rho,
 \qquad
 \sum_{\alpha=1}^{n}\xi_{\alpha}\,d\xi_{\alpha}=-(\rho+\cos\theta)\,d\rho,
 \qquad
 \sum_{\alpha=1}^{n}d\xi_{\alpha}\otimes d\xi_{\alpha}
 +d\rho\otimes d\rho=\sigma.
\]
A direct substitution gives the stated formula. Finally, \eqref{eq:omega-boundary} follows from \eqref{eq:basic-boundary-quotients}.
\end{proof}

\begin{remark}
The first identity in \eqref{eq:omega-completeness} says that the map $\omega:\Ccap\to \R^{n+1}$ is sphere-valued, namely $\omega(\Ccap)\subset \Sph^n$. This is only a convenient way to package the scalar functions in \eqref{eq:omega-def}. The argument relies solely on the two Parseval-type identities in \eqref{eq:omega-completeness}, the orthogonality relation \eqref{eq:omega-orthogonality}, and the boundary decomposition \eqref{eq:omega-boundary}; no additional geometric structure is introduced. 
\end{remark}

In the closed problem, the ambient coordinate functions of the inverse Gauss map are already admissible test functions. In the capillary setting, however, the horizontal coordinate functions must be modified to satisfy the boundary condition, whereas the vertical coordinate naturally serves as a height function.

We set
\begin{equation}\label{eq:zeta-B-def}
 \zeta\coloneqq  {\cos\theta}\ell^{-1}\ip{X}{e},
 \qquad
 B\coloneqq A[h]-\zeta\sigma.
\end{equation}
From \eqref{eq:F-horizontal}, we have  $F_{n+1}=-\tan\theta\zeta$. The tensor $B$ is the curvature-radius tensor corrected by the same vertical height which appears in the last coordinate.

\begin{proposition}
\label{prop:frame}
Let $F_A$ be given by \eqref{eq:F-horizontal}. Then the following statements hold.

\smallskip
\noindent\emph{(i) Coordinate representation and zeroth-order completeness.}
\begin{equation}\label{eq:T-def}
 F_A=\sin\theta\,u\omega_A
 +\frac{\ell^{2}}{\sin\theta}\ip{\nabla u}{\nabla\omega_A},
 \qquad 1\leq A\leq n+1.
\end{equation}
Consequently, for every constant $c\in\mathbb{R}$,
\begin{equation}\label{eq:F-equals-T}
 F_A-c\sin\theta\,\omega_A
 =\sin\theta\,(u-c)\omega_A
 +\frac{\ell^{2}}{\sin\theta}\ip{\nabla u}{\nabla\omega_A},
 \qquad 1\leq A\leq n+1,
\end{equation}
and
\begin{equation}\label{eq:T-zero-completeness}
 \sum_{A=1}^{n+1}\bigl(F_A-c\sin\theta\,\omega_A\bigr)^{2}
 =\ell^{2}\norm{\nabla u}^{2}+\sin^{2}\theta (u-c)^{2}.
\end{equation}
In particular, taking $c=0$,
\begin{equation}\label{eq:zero-completeness}
 \sum_{A=1}^{n+1}F_A^{2}
 =\ell^{2}\norm{\nabla u}^{2}+\sin^2\theta u^{2}.
\end{equation}

\smallskip
\noindent\emph{(ii) First-order completeness.}
\begin{equation}\label{eq:F-gradient}
 \nabla_iF_A=\frac{\ell}{\sin\theta}B_{ij}\nabla^{j}\omega_A,
 \qquad 1\leq A\leq n+1,
\end{equation}
and consequently
\begin{equation}\label{eq:first-completeness}
 \sum_{A=1}^{n+1}(F_A)_i(F_A)_j=B_{ik}B_j{}^{k}.
\end{equation}

\smallskip
\noindent\emph{(iii)  Boundary condition.}  

\begin{equation}\label{eq:F-boundary}
 \nabla_{\mu}F_{\alpha}=0,
 \quad 1\leq\alpha\leq n,
 \qquad
 F_{n+1}=0
 \quad\text{on }\partial\Ccap.
\end{equation}
\end{proposition}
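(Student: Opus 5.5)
The plan is to reduce all three parts to explicit formulas for $X$ and its derivatives in the inverse capillary Gauss map parametrization \eqref{eq:inverse-gauss}, and then to use the frame identities \eqref{eq:omega-completeness}--\eqref{eq:omega-boundary}.

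\emph{Part (i).} We work pointwise on $\Ccap$ and compute $\ip{X}{E_\alpha}$ and $\ip{X}{e}$ from $X=\nabla h+h(\xi-\cos\theta\,e)$. Here $\nabla h$ is viewed as a tangent vector in $\R^{n+1}$, so $\ip{\nabla h}{E_\alpha}=\ip{\nabla h}{\nabla\xi_\alpha}$ and $\ip{\nabla h}{e}=-\ip{\nabla h}{\nabla\rho}$. This gives
\[
\ip{X}{E_\alpha}=\ip{\nabla h}{\nabla\xi_\alpha}+h\xi_\alpha,
\qquad
\ip{X}{e}=-\ip{\nabla h}{\nabla\rho}-h\rho-h\cos\theta .
\]
Next substitute $h=\ell u$ and $\nabla\ell=-\cos\theta\nabla\rho$, and rewrite $\nabla\omega_A$ via $\nabla(\xi_\alpha/\ell)=\ell^{-1}\nabla\xi_\alpha+\cos\theta\,\xi_\alpha\ell^{-2}\nabla\rho$. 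Two facts will be needed to collect the $u$-terms: the relation $\ell^{-1}\bigl(\ell-\cos\theta\ip{\nabla\ell}{\nabla\rho}\bigr)$, and the identity $|\nabla\rho|^2=1-(\rho+\cos\theta)^2$, which comes from \eqref{eq:sphere-coordinate-identity} since $\ip{\xi}{e}$ restricted to the sphere has $|\nabla\rho|^2=1-\xi_{n+1}^2$ in sphere-centered coordinates. With these, one should arrive at \eqref{eq:T-def}. The main check is that the correction term $-\cos\theta\,\ell^{-1}\xi_\alpha\ip{X}{e}$ in $F_\alpha$ exactly absorbs the extra $\nabla\rho$ contributions. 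Formula \eqref{eq:F-equals-T} then follows by subtracting $c\sin\theta\,\omega_A$. For \eqref{eq:T-zero-completeness}, expand the square and use the three identities $\sum\omega_A^2=1$, $\sum\omega_A\nabla\omega_A=0$ (which kills the cross term), and $\sum\nabla\omega_A\otimes\nabla\omega_A=\sin^2\theta\,\ell^{-2}\sigma$, which turns $\ell^4\sin^{-2}\theta\sum\ip{\nabla u}{\nabla\omega_A}^2$ into $\ell^2|\nabla u|^2$.

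\emph{Part (ii).} Rather than differentiating \eqref{eq:T-def}, I would differentiate $X$ directly. The classical fact is that $dX=A[h]_{ij}\,\nabla^j(\cdot)$, that is, $\nabla_iX=A[h]_i{}^k\partial_k\xi$ as ambient vectors. This holds because $\xi-\cos\theta\,e$ is the unit normal, and the computation is the same as on the sphere, since translating by the constant vector $\cos\theta\,e$ does not affect derivatives. Then:
\begin{itemize}
\item[$\bullet$] $\nabla_i\ip{X}{E_\alpha}=A[h]_{ik}\nabla^k\xi_\alpha$;
\item[$\bullet$] $\nabla_i\ip{X}{e}=-A[h]_{ik}\nabla^k\rho$;
\item[$\bullet$] the derivative of the coefficients $\ell^{-1}\xi_\alpha$ and $\ell^{-1}$ brings in $\ip{X}{e}$ times $\nabla(\xi_\alpha/\ell)$, and these terms produce exactly $-\zeta\sigma$.
\end{itemize}
Concretely, $\nabla_iF_{n+1}=-\sin\theta\bigl(\ell^{-1}\nabla_i\ip{X}{e}+\ip{X}{e}\nabla_i\ell^{-1}\bigr)$. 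Writing $\nabla\ell^{-1}=\cos\theta\,\ell^{-2}\nabla\rho$ and $\nabla\omega_{n+1}=\ell^{-1}\nabla\rho+\rho\cos\theta\,\ell^{-2}\nabla\rho=\sin^2\theta\,\ell^{-2}\nabla\rho$ (by \eqref{eq:ell-rho}) gives $\nabla_iF_{n+1}=\frac{\ell}{\sin\theta}\bigl(A[h]-\zeta\sigma\bigr)_{ij}\nabla^j\omega_{n+1}$. For the horizontal components one again uses \eqref{eq:ell-rho} to express $\nabla^j\omega_\alpha$. Then \eqref{eq:first-completeness} follows immediately from the Parseval identity in \eqref{eq:omega-completeness}.

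\emph{Part (iii).} On $\partial\Ccap$ we have $\omega_{n+1}=0$, and, because $\rho=0$ there, $\ip{X}{e}=0$ (the boundary of $\Sigma$ lies on the hyperplane), so $F_{n+1}=0$. For $\nabla_\mu F_\alpha$, use \eqref{eq:F-gradient} with $i=\mu$. By the block decomposition \eqref{eq:block-boundary}, $B(\mu,\cdot)$ is proportional to $\mu^\flat$, so $\nabla_\mu F_\alpha=\frac{\ell}{\sin\theta}B(\mu,\mu)\nabla_\mu\omega_\alpha=0$ by \eqref{eq:omega-boundary}.

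The main obstacle is the algebra in (i), namely matching the horizontal correction term. I would verify it by checking that both sides have the same gradient (part (ii)) and agree at a single point, or else by direct substitution.
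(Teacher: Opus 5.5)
Your proposal is correct and follows essentially the paper's route: direct substitution of $h=\ell u$ into \eqref{eq:inverse-gauss} together with the identities \eqref{eq:omega-completeness}--\eqref{eq:omega-orthogonality} for (i), the identity $D_iX=A[h]_i{}^{j}\nabla_j\xi$ for (ii), and the block condition \eqref{eq:block-boundary} for $\nabla_\mu F_\alpha=0$ in (iii). The one step to tighten is $F_{n+1}=0$ on $\partial\Ccap$. Knowing $\rho=0$ there does not by itself give $\ip{X}{e}=0$, and appealing to $\partial\Sigma$ lying on the hyperplane presupposes a geometric $\Sigma$, whereas in Theorem~\ref{prop:normalized} $h$ is only an analytic solution. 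Argue as the paper does instead: on $\partial\Ccap$ one has $e=\sin\theta\,\mu-\cos\theta(\xi-\cos\theta\,e)$, and the Robin condition then gives $\ip{X}{e}=\sin\theta\,\nabla_\mu h-\cos\theta\,h=0$.
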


\begin{proof}

We first derive the coordinate representation. From \eqref{eq:inverse-gauss}, together with $h=\ell u$ and $d\ell=-\cos\theta\,d\rho$, we obtain
\begin{equation*} 
  \langle X,e\rangle=-\rho u-\ell\,\langle\nabla\rho,\nabla u\rangle.
\end{equation*}
Using \eqref{eq:sphere-coordinate-identity} to evaluate the scalar products involving $\nabla\xi_{\alpha}$ and $\nabla\rho$, a direct substitution in \eqref{eq:F-horizontal} gives
\[
 F_{\alpha}
 =\sin\theta\,u\omega_{\alpha}+\frac{\ell^{2}}{\sin\theta}
 \ip{\nabla u}{\nabla\omega_{\alpha}},
 \qquad
 F_{n+1}
 =\sin\theta\,u\omega_{n+1}+\frac{\ell^{2}}{\sin\theta}
 \ip{\nabla u}{\nabla\omega_{n+1}}.
\]
This proves \eqref{eq:T-def}. Subtracting $c\sin\theta\,\omega_A$ from both sides gives \eqref{eq:F-equals-T}.

We now square \eqref{eq:F-equals-T} and sum over $A$. The mixed term vanishes by \eqref{eq:omega-orthogonality}, while the two diagonal terms are evaluated by \eqref{eq:omega-completeness}. Hence
\[
 \sum_{A=1}^{n+1}\bigl(F_A-c\sin\theta\,\omega_A\bigr)^2
 =\sin^2\theta (u-c)^{2}+\frac{\ell^{4}}{\sin^2\theta}
 \sum_{A=1}^{n+1} \ip{\nabla u}{\nabla\omega_A}^{2}
 =\sin^2\theta (u-c)^{2}+\ell^{2}\norm{\nabla u}^{2}.
\]
This proves \eqref{eq:T-zero-completeness}; taking $c=0$ gives \eqref{eq:zero-completeness}.

We next compute the first derivatives of the modified coordinate functions.
Writing $D$ for the Euclidean connection, the standard inverse Gauss map
identity gives
\begin{equation*} 
  D_iX=A[h]_i{}^{j}\nabla_j\xi.
\end{equation*}
Therefore
\[
  \nabla_i\langle X,E_{\alpha}\rangle
  =A[h]_i{}^{j}\nabla_j\xi_{\alpha},
  \qquad
  \nabla_i\langle X,e\rangle
  =\frac{1}{\cos\theta}A[h]_i{}^{j}\nabla_j\ell.
\]
Using \eqref{eq:zeta-B-def}, we find
\begin{equation}\label{eq:F-alpha-gradient-direct}
  \nabla_iF_{\alpha}
  =\ell\,B_i{}^{j}\nabla_j\left( \frac{\xi_{\alpha}}{\ell}\right)
  =\frac{\ell}{\sin\theta}B_i{}^{j}\nabla_j\omega_{\alpha}.
\end{equation}
Similarly,
\begin{equation}\label{eq:zeta-gradient}
  \ell\nabla_i\zeta=B_i{}^{j}\nabla_j\ell.
\end{equation}
Since
\begin{equation}\label{eq:rho-ell-gradient}
  \nabla\left(\frac{\rho}{\ell}\right)
  =-\frac{\sin^{2}\theta}{\cos\theta\,\ell^{2}}\nabla\ell,
\end{equation}
the identities \eqref{eq:zeta-gradient} and \eqref{eq:rho-ell-gradient}, together with $F_{n+1}=-\tan\theta \zeta$, yield the case $A=n+1$ of \eqref{eq:F-gradient}. Summing the products of \eqref{eq:F-gradient} over $A$ and applying the second identity in \eqref{eq:omega-completeness}, we prove \eqref{eq:first-completeness}.

It remains to verify the boundary conditions. On
$\partial\Ccap$, the function $\xi_{\alpha}/\ell$ satisfies the homogeneous
Neumann boundary condition by \eqref{eq:basic-boundary-quotients}; hence
$\nabla(\xi_{\alpha}/\ell)$ is tangential. When $n=1$, the tangential space
of $\partial\Ccap$ is zero, so $\nabla(\xi_1/\ell)=0$ at both endpoints and
\eqref{eq:F-alpha-gradient-direct} directly gives
$\nabla_\mu F_1=0$. When $n\geq2$, the same conclusion follows from the
block condition \eqref{eq:block-boundary} and
\eqref{eq:F-alpha-gradient-direct}.

On $\partial\Ccap$, one has
\[
  e=\sin\theta\,\mu-\cos\theta(\xi-\cos\theta\,e).
\]
Using \eqref{eq:robin} and \eqref{eq:inverse-gauss}, 
\[
  \langle X,e\rangle=\sin\theta\,\nabla_{\mu}h-\cos\theta\,h=0.
\]
Thus, $F_{n+1}=0$ on $\partial\Ccap$.
\end{proof}

\begin{remark}
The ordinary coordinate function $\langle X,E_{\alpha}\rangle$ does not, in general, satisfy a Neumann boundary condition. The definition \eqref{eq:F-horizontal} adds a vertical multiple of $\langle X,e\rangle$, with coefficient fixed by the function $\xi_{\alpha}/\ell$. The resulting derivative \eqref{eq:F-alpha-gradient-direct} becomes exactly $\ell\,B\,\nabla(\xi_{\alpha}/\ell)$, so both the Neumann boundary condition and the tensor completeness are preserved. In particular, the correction is forced by the boundary geometry; it is not chosen to cancel a later term.
\end{remark}

\section{Sharp  Poincar\'e  inequalities}\label{sec:3-spectral}

\subsection{The elliptic operator and boundary spectrum}
Throughout this section, let $\theta\in(0,\frac\pi2)$ and let
$h\in C^\infty(\Ccap)$ satisfy
\[
  h>0,\qquad A[h]>0,\qquad
  \nabla_\mu h=\cot\theta\,h
  \quad\text{on }\partial\Ccap.
\]
For $f\in C^{2}(\Ccap)$, define
\begin{equation*}
  \cL_h f
  \coloneqq -\frac{1}{h\det A[h]}\,\nabla_j\bigl(h^{2}U^{ij}\nabla_i f\bigr).
\end{equation*}
 
The following identity relates $\cL_h$ to the multiplicative linearization of the Monge--Amp\`ere operator.

\begin{lemma} 
\label{lem:spectral-dictionary}
For every $f\in C^{2}(\Ccap)$, we have
\begin{equation}\label{eq:spectral-dictionary}
  \tr\bigl(A[h]^{-1}A[hf]\bigr)=nf-\cL_hf.
\end{equation}
\end{lemma}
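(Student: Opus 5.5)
We prove the identity by direct computation, expanding $A[hf]$ by the Leibniz rule and then rewriting the first-order terms through the divergence-free property \eqref{eq:U-div-free} of the cofactor tensor. Since the identity is pointwise, no boundary condition enters.

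\emph{Expansion.} First we write
\[
  A[hf]=\nabla^2(hf)+hf\sigma = f\,A[h] + h\nabla^2 f + \nabla h\otimes\nabla f+\nabla f\otimes\nabla h .
\]
Contracting with $A[h]^{-1}=U/\det A[h]$, the first term contributes $f\,\tr(\mathrm{Id})=nf$. We therefore obtain
\[
  \tr\bigl(A[h]^{-1}A[hf]\bigr)= nf+\frac{1}{\det A[h]}\Bigl(h\,U^{ij}\nabla_{ij}f+2U^{ij}\nabla_i h\nabla_j f\Bigr).
\]

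\emph{Divergence form.} Next we expand the divergence in the definition of $\cL_h$. Using \eqref{eq:U-div-free} and the symmetry of $U$,
\[
  \nabla_j\bigl(h^2U^{ij}\nabla_i f\bigr)=h^2U^{ij}\nabla_{ij}f+2hU^{ij}\nabla_j h\nabla_i f .
\]
Dividing by $h\det A[h]$ gives exactly the bracketed expression above divided by $\det A[h]$. Hence
\[
  -\cL_h f=\frac{1}{\det A[h]}\bigl(hU^{ij}f_{ij}+2U^{ij}h_if_j\bigr),
\]
which proves \eqref{eq:spectral-dictionary}.

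The only nontrivial input is \eqref{eq:U-div-free}. It holds because $A[h]$ is a Codazzi tensor on the round sphere, that is, $\nabla_k A[h]_{ij}$ is totally symmetric, which follows from the commutation formula for third derivatives on a space of constant curvature $1$. The divergence-free property of the cofactor tensor of a Codazzi tensor is standard, and the paper already records it. For $n=1$ everything is scalar with $U^{11}=1$, and the computation reduces to
\[
  \frac{(hf)''+hf}{h''+h}=f+\frac{hf''+2h'f'}{h''+h}=f+\frac{(h^2f')'}{h(h''+h)},
\]
which is immediate. I expect no real obstacle; the main care is bookkeeping of index placement and the factor $2$ from symmetrization.
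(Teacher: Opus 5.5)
Your proposal is correct and is essentially the paper's proof: expand $A[hf]$ by the product rule, then use $\nabla_iU^{ij}=0$ together with $U=\det A[h]\,A[h]^{-1}$ to write $\cL_hf$ as $-h(A[h]^{-1})^{ij}f_{ij}-2(A[h]^{-1})^{ij}h_if_j$, and take the $A[h]^{-1}$-trace. Your $n=1$ scalar check is also correct.
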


\begin{proof}
The product rule gives
\begin{equation}\label{eq:A-product}
 A[hf]=fA[h]+h\nabla^{2}f+dh\otimes df+df\otimes dh.
\end{equation}
Using \eqref{eq:U-div-free} and \eqref{eq:U-def},
\[
 \cL_hf
 =-h(A[h]^{-1})^{ij}f_{ij}
 -2(A[h]^{-1})^{ij}h_i f_j.
\]
Taking the $A[h]^{-1}$-trace of \eqref{eq:A-product} proves
\eqref{eq:spectral-dictionary}.
\end{proof}

We now apply this identity to the quotient functions underlying the adapted frame. Set
\begin{equation}\label{eq:basic-modes}
 \psi_{\alpha}\coloneqq \frac{\xi_{\alpha}}{h},
 \quad 1\leq\alpha\leq n,
 \qquad
 \psi_{n+1}\coloneqq \frac{\rho}{h}.
\end{equation}
Since $h\psi_{\alpha}=\xi_{\alpha}$ and $h\psi_{n+1}=\rho$,
Lemma~\ref{lem:spectral-dictionary}, \eqref{eq:model-hessians}, and
\eqref{qeq:robin-bry} imply
\begin{equation*}
\left\{
\begin{array}{rlll}
  \cL_h\psi_{\alpha} &=& n\psi_{\alpha} & \text{in }\Ccap,\\
  \nabla_{\mu}\psi_{\alpha} &=& 0 & \text{on }\partial\Ccap,
\end{array}
\right.
\end{equation*}
and
\begin{equation*}
\left\{
\begin{array}{rlll}
  (\cL_h-n)\psi_{n+1} &=& \cos\theta\,\tr(A[h]^{-1}) & \text{in }\Ccap,\\
  \psi_{n+1} &=& 0 & \text{on }\partial\Ccap.
\end{array}
\right.
\end{equation*} 
Hence
$\psi_\alpha$ ($\alpha =1,\ldots, n$) are the Neumann eigenfunctions of $\mathcal{L}_h$, while $\psi_{n+1}$ is the Dirichlet eigenfunction of $\mathcal{L}_h$ if and only if $\theta=\pi/2$.

\subsection{A  Poincar\'e  inequality for the Neumann boundary condition}
Following \cite[Definition~2.5]{MeiWangWengXia}, a function
$f\in C^2(\Ccap)$ is called a \emph{capillary function} if it satisfies the
Robin boundary condition
\begin{equation*}
  \nabla_{\mu}f=\cot\theta\,f
  \qquad\text{on }\partial\Ccap.
\end{equation*}
A capillary function $f$ is called \emph{capillary convex} if $A[f]$ is
positive definite.  
Let $Q$ denote the mixed discriminant,
normalized by $Q(A,A,\ldots,A)=\det A$. For capillary functions
$f_0,f_1,\ldots,f_n$, we recall the capillary mixed volume from
\cite[Eq.~(2.9)]{MeiWangWengXia}:
\begin{equation}\label{eq:mixed-volume}
  V(f_0,f_1,\ldots,f_n)
  \coloneqq \frac{1}{n+1}\int_{\Ccap}
  f_0\,Q\bigl(A[f_1],A[f_2],\ldots,A[f_n]\bigr)\,d\sigma.
\end{equation}
The symmetry of \eqref{eq:mixed-volume} follows from integration by parts
and the Robin boundary condition. When $n=1$, one has $Q(A)=A$, so
\eqref{eq:mixed-volume} becomes
\begin{eqnarray}\label{eqn:n=1-mixed-n-volume}
      V(f,f_1)=\frac12\int_{-\theta}^{\theta}f(f_1''+f_1)\,ds.
\end{eqnarray}

For $n\geq2$, we use the capillary Alexandrov--Fenchel inequality from \cite[Theorem~3.2]{MeiWangWengXia}. The planar case $n=1$, which follows from the classical Minkowski inequality \cite[Theorem~7.2.1, Eq.~(7.18)]{Schneider2014}, is restated below in the capillary setting.

\begin{theorem}\label{thm-mwwx}
Let $n\geq1$, and let $f_{1},\ldots,f_{n}\in C^{2}(\Ccap)$ be capillary convex functions. If $n=1$, assume in addition that $f_1>0$. If $n\geq2$, assume that at least one of $f_{k}$ ($2\le k\le n$) is positive. Then for any capillary function $f\in C^{2}(\Ccap)$,
\begin{equation}\label{eq:function-AF}
  V^{2}(f,f_{1},f_{2},\ldots,f_{n})
  \ge V(f,f,f_{2},\ldots,f_{n})\,V(f_{1},f_{1},f_{2},\ldots,f_{n}).
\end{equation}
Here the arguments $f_2,\ldots,f_n$ are omitted when $n=1$.
Equality holds if and only if
\begin{eqnarray}\label{eqn:af-ineq-equality}
      f=af_{1}+\sum_{\alpha=1}^{n}a_{\alpha}\xi_\alpha,
\end{eqnarray}
for some constants $a,a_{\alpha}\in\R$, $1\leq \alpha\leq n$.
\end{theorem}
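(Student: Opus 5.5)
For $n\geq2$ we simply invoke \cite[Theorem~3.2]{MeiWangWengXia}, so the only case needing an argument is $n=1$. There the strategy is to identify the capillary mixed area \eqref{eqn:n=1-mixed-n-volume} with the classical mixed area of two planar convex bodies resting on $\partial\R^2_+$. Then \eqref{eq:function-AF} becomes Minkowski's inequality $V(K,L)^2\ge V(K,K)V(L,L)$ \cite[Theorem~7.2.1]{Schneider2014}, and its equality case (homothety) translates into \eqref{eqn:af-ineq-equality}.

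\emph{Step 1: reduction to capillary convex $f$.} The form $V(\cdot,\cdot)$ is bilinear and symmetric on capillary functions; symmetry follows by integrating by parts twice, with the boundary terms cancelling by the Robin condition. Hence the discriminant $\Delta(f)\coloneqq V(f,f_1)^2-V(f,f)V(f_1,f_1)$ satisfies $\Delta(f+Cf_1)=\Delta(f)$ for every constant $C$. Since $f_1''+f_1>0$ on the compact interval $[-\theta,\theta]$, the function $g\coloneqq f+Cf_1$ is capillary convex for $C$ large. It therefore suffices to treat capillary convex $f$. The equality characterization transfers back, because $g=af_1+a_1\xi_1$ if and only if $f=(a-C)f_1+a_1\xi_1$.

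\emph{Step 2: geometric realization.} For a capillary convex $g$, the curve $X_g(s)=g'(s)\,(\cos s,-\sin s)+g(s)\,(\sin s,\cos s)$ from \eqref{eq:inverse-gauss} has positive curvature $1/(g''+g)$ and normal $\nu=(\sin s,\cos s)$. The computation at the end of the proof of Proposition~\ref{prop:frame} shows $\langle X_g,e\rangle=0$ at $s=\pm\theta$, so both endpoints lie on $\partial\R^2_+$. Let $K_g$ be the convex hull of this arc, i.e.\ the region bounded by the arc and the segment joining its endpoints. I will check that the arc lies in $\overline{\R^2_+}$: since the normal angle is monotone and starts and ends at $\mp\theta$ with $\theta<\pi/2$, the height is monotone on each side of $s=0$. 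Hence $K_g$ is a convex body whose ordinary support function $H_g$ satisfies $H_g(\sin s,\cos s)=g(s)$ for $|s|\le\theta$ and $H_g(-E_2)=0$.

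\emph{Step 3: identification of the mixed area.} Use $V(K_g,K_{f_1})=\tfrac12\int_{\Sph^1}H_g\,dS_{K_{f_1}}$. The surface area measure of $K_{f_1}$ has three pieces:
\begin{itemize}
\item the density $(f_1''+f_1)\,ds$ on the arc of normals $|s|\le\theta$;
\item an atom at $-E_2$, whose contribution is killed by $H_g(-E_2)=0$;
\item nothing on the remaining normal angles, since these correspond to the two corners of $K_{f_1}$, which carry zero length.
\end{itemize}
Thus $V(K_g,K_{f_1})=V(g,f_1)$, and likewise for the other two pairings. Minkowski's inequality then gives \eqref{eq:function-AF}.

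\emph{Step 4: equality.} Equality in Minkowski's inequality forces $K_g=aK_{f_1}+t$ with $a\geq0$ and $t\in\R^2$. Both bodies have their bottom edge on $\partial\R^2_+$, so $t=(t_1,0)$ is horizontal. Its support contribution on the arc is $t_1\sin s=t_1\xi_1$, which gives $g=af_1+t_1\xi_1$. Conversely, such functions give equality by bilinearity together with $V(\xi_1,\cdot)=0$, which holds since $A[\xi_1]=0$.

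I expect the main obstacle to be Step~2. One must confirm that a capillary convex $g$ whose sign is not assumed still produces a genuine convex body in the closed half-plane with $H_g(-E_2)=0$. It is also the reason $f_1>0$ is assumed, to avoid a degenerate $K_{f_1}$ in the equality case. If this sign issue is delicate, the fallback is to add a large multiple of $\ell$ instead of $f_1$ in Step~1, and to use that $\ell$ is positive with $A[\ell]=1$.
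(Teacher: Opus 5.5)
Your proposal is correct and follows essentially the paper's route. You cite \cite[Theorem~3.2]{MeiWangWengXia} for $n\ge2$, and for $n=1$ you identify $V$ with the classical mixed area of the bodies cut off by $\partial\R^2_+$, apply the planar Minkowski inequality, and use homothety plus the bottom-edge normalization to force a horizontal translation; your shift $f+Cf_1$ with $C$ large is just the paper's perturbation $f_1+tf$ rescaled by $t=1/C$, and you spell out the mixed-area identification in more detail. Your worry in Step~2 is unfounded, since $\frac{d}{ds}\langle X_g,E_2\rangle=-(g''+g)\sin s$ needs no sign condition on $g$ (and $g>0$ for large $C$ anyway because $f_1>0$), so the fallback of adding $C\ell$ is unnecessary, and as stated it would not preserve your discriminant $\Delta$.
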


\begin{proof}
For $n\geq 2$, the result follows from \cite[Theorem~3.2]{MeiWangWengXia}. The remaining case $n=1$ is proved in the Appendix \ref{appendix}.
\end{proof}

Following Andrews' treatment of the closed case \cite{andrews1997}, we have 
the following local form of the capillary Alexandrov--Fenchel inequality, which is the second-variation form of \eqref{eq:function-AF}.
\begin{lemma}
\label{lem:local-af} 
If $f\in C^2(\Ccap)$ and $\nabla_\mu f=0$ on $\partial\Ccap$, then
\begin{equation}
  \int_{\Ccap}h^2U^{ij}f_i f_j\dd
  \ge n\left\{
  \int_{\Ccap}f^2h \det(A[h]) \dd
  -\frac{\bigl(\int_{\Ccap}fh \det(A[h])\dd\bigr)^2}
         {\int_{\Ccap}h \det(A[h])\dd}\right\}.
  \label{eq:local-af}
\end{equation}
Equality holds if and only if
\begin{equation}
  f=a+\sum_{\alpha=1}^{n}b_\alpha\frac{\xi_\alpha}{h},
  \label{eq:poincare-kernel}
\end{equation}
for some constants $a,b_\alpha\in\mathbb{R}$.
\end{lemma}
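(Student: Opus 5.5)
The plan is the Andrews-style second variation of the capillary Alexandrov--Fenchel inequality in Theorem~\ref{thm-mwwx}, applied to $f_1=\dots=f_n=h$ and a perturbation $hf$. Note that if $\nabla_\mu f=0$, then $g\coloneqq hf$ is a capillary function, since $\nabla_\mu(hf)=f\nabla_\mu h=\cot\theta\,hf$. Applying \eqref{eq:function-AF} with $f\to g$ and $f_1=\dots=f_n=h$ (capillary convex, $h>0$) gives
\[
V(g,h,\dots,h)^2\ge V(g,g,h,\dots,h)\,V(h,\dots,h).
\]
I will express each term through $f$:
\begin{itemize}
\item $V(h,\dots,h)=\frac1{n+1}\int h\det A[h]$;
\item $V(g,h,\dots,h)=\frac1{n+1}\int fh\det A[h]$;
\item $(n+1)V(g,g,h,\dots,h)=\frac1n\int hf\,U^{ij}A[hf]_{ij}$.
\end{itemize}
The last uses that the mixed discriminant with one slot $A[g]$ and the rest $A[h]$ equals $\frac1n U^{ij}A[g]_{ij}$.

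By Lemma~\ref{lem:spectral-dictionary}, $U^{ij}A[hf]_{ij}=\det A[h]\,\tr(A[h]^{-1}A[hf])=\det A[h](nf-\cL_hf)$. Therefore
\[
\int hf\,U^{ij}A[hf]_{ij}=n\int f^2h\det A[h]+\int f\,\nabla_j(h^2U^{ij}f_i).
\]
Integrating by parts, the last term becomes $-\int h^2U^{ij}f_if_j$ plus the boundary term $\int_{\partial\Ccap} fh^2U^{ij}f_i\mu_j$. I will check that this boundary term vanishes. By \eqref{eq:block-boundary}, $U$ is block diagonal along $\partial\Ccap$, so $U^{ij}\mu_j$ is parallel to $\mu$ and $U^{ij}f_i\mu_j=U(\mu,\mu)\nabla_\mu f=0$. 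For $n=1$ it is immediate since $U=1$. Substituting into the AF inequality, multiplying out the normalizations, and rearranging gives exactly \eqref{eq:local-af}.

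For the equality case, \eqref{eq:local-af} is an equality iff \eqref{eq:function-AF} is an equality for $g=hf$. By \eqref{eqn:af-ineq-equality}, this happens iff $hf=ah+\sum a_\alpha\xi_\alpha$, which is \eqref{eq:poincare-kernel}. Conversely, these $f$ do satisfy $\nabla_\mu f=0$, since $\xi_\alpha/h$ is a quotient of two capillary functions; so equality is attained.

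The main point needing care is the validity of the mixed-discriminant formula for $V(g,g,h,\dots,h)$, which uses the symmetry of $V$ and the polarization of $Q$. The other is the vanishing of the boundary term, which relies on the block structure and the Neumann condition. If symmetry of $V$ on non-convex capillary $g$ is not directly available, it follows from the integration by parts mentioned after \eqref{eq:mixed-volume}, using the same block argument.
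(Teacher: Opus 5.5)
Your proposal is correct and is essentially the paper's argument. The paper subtracts the $h\det A[h]$-weighted mean and sets $q=h(f-\bar f_h)$, so that $V(q,h,\ldots,h)=0$. It then deduces $V(q,q,h,\ldots,h)\le 0$ from \eqref{eq:function-AF} applied to $h+tq$ and $h$, and uses the same integration by parts with \eqref{eq:block-boundary}.

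Applying \eqref{eq:function-AF} directly to $g=hf$, as you do, is equivalent and slightly cleaner: the equality case then needs no separate argument that the coefficient of $h$ vanishes. Your concern about the symmetry of $V$ is unnecessary, since all three mixed volumes you use come straight from definition \eqref{eq:mixed-volume} with $g$ in the first slot.
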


\begin{proof} 
Let
\[
  \bar f_h\coloneqq
  \frac{\int_{\Ccap}fh\det(A[h])\dd}
       {\int_{\Ccap}h\det(A[h])\dd},
  \qquad
  f_0\coloneqq f-\bar f_h,
  \qquad
  q\coloneqq hf_0.
\]
Since $\nabla_\mu f_0=0$ on $\partial\Ccap$, we have
\[
  \nabla_\mu q
  =f_0\nabla_\mu h+h\nabla_\mu f_0
  =\cot\theta\,q
  \qquad\text{on }\partial\Ccap.
\]
In particular, when $n=1$,
\[
  q'(-\theta)=-\cot\theta\,q(-\theta),
  \qquad
  q'(\theta)=\cot\theta\,q(\theta).
\]
For sufficiently small $t$, the perturbation $h+tq$ remains a capillary convex support function. Applying the capillary Alexandrov--Fenchel inequality \eqref{eq:function-AF} to $h+tq$ and $h$, and using
\[
  V(q,h,\ldots,h)=0,
\]
gives
\[
  V(q,q,h,\ldots,h)\le 0.
\]
The mixed-discriminant formula, followed by integration by parts using \eqref{eq:block-boundary} and the condition $\nabla_{\mu}f_0=0$ on $\partial\Ccap$, yields
\begin{equation}
  n(n+1)V(q,q,h,\ldots,h)
  =
  n\int_{\Ccap}f_0^2h\det(A[h])\dd
  -\int_{\Ccap}h^2U^{ij}(f_0)_i(f_0)_j\dd.
  \label{eq:second-variation}
\end{equation}
When $n=1$, identity \eqref{eq:second-variation} can also be verified directly. Since $q=hf_0$, $A[h]=h''+h$, and $f_0'(\pm\theta)=0$, integration by parts gives
\[
  2V(q,q)
  =\int_{-\theta}^{\theta}f_0^2h(h''+h)\,ds
  -\int_{-\theta}^{\theta}h^2(f_0')^2\,ds,
\]
which is exactly \eqref{eq:second-variation} for $n=1$.
Since $V(q,q,h,\ldots,h)\le 0$, this proves \eqref{eq:local-af}.

We now characterize the equality case. Assume equality holds in \eqref{eq:local-af}. Then \eqref{eq:second-variation} yields
$V(q,q,h,\ldots,h)=0$. Together with $V(q,h,\ldots,h)=0$, the quadratic expansion of
$V(h+tq,h+tq,h,\ldots,h)$ shows that equality holds in the capillary Alexandrov--Fenchel
inequality for $(h+tq,h)$ for all sufficiently small $t$. Hence, by Theorem~\ref{thm-mwwx},
\[
  q=bh+\sum_{\alpha=1}^{n}b_\alpha\,\xi_\alpha.
\]
Since $V(q,h,\ldots,h)=0$ and $V(\xi_\alpha,h,\ldots,h)=0$ for each $\alpha$ (horizontal
translations), we obtain $b\,V(h,h,\ldots,h)=0$, hence $b=0$. Recalling $q=h(f-\bar f_h)$,
we conclude that
\[
  f=\bar f_h+\sum_{\alpha=1}^{n}b_\alpha\frac{\xi_\alpha}{h},
\]
which is \eqref{eq:poincare-kernel}.

Conversely, if $f=a+\sum\limits_{\alpha=1}^{n}b_\alpha\xi_\alpha/h$, then $f$ satisfies
$\nabla_\mu f=0$ on $\partial\Ccap$ and $\bar f_h=a$, so $q=h(f-\bar f_h)=\sum b_\alpha\xi_\alpha$.
Since $A[\xi_\alpha]=0$, we have $A[q]=0$ and thus $V(q,q,h,\ldots,h)=0$. Plugging into
\eqref{eq:second-variation} gives equality in \eqref{eq:local-af}.
\end{proof}

For the model case $h=\ell$, Lemma~\ref{lem:local-af} yields the sharp weighted Poincar\'e inequality.

\begin{corollary}\label{cor:weighted-poincare-neumann}
For every $f\in C^{2}(\Ccap)$ with $\nabla_{\mu}f=0$ on $\partial\Ccap$,
\begin{equation}\label{eq:model-poincare}
  \int_{\Ccap}\ell^{2}|\nabla f |^{2}\,d\sigma
  \ge n\int_{\Ccap}(f-\bar f)^{2}\ell\,d\sigma,
  \qquad
  \bar f\coloneqq \frac{\int_{\Ccap} f\ell\,d\sigma}{\int_{\Ccap} \ell\,d\sigma}.
\end{equation}
Equality holds if and only if
\[
  f=a+\sum_{\alpha=1}^{n}b_{\alpha}\frac{\xi_{\alpha}}{\ell}
\]
for some constants $a,b_{\alpha}\in\R$. Equivalently, the first nonzero Neumann eigenvalue of
\[
  \cL_{\ell}f\coloneqq -\frac{1}{\ell}\,\diver(\ell^{2}\nabla f)
\]
is $n$, with eigenspace $\operatorname{span}\{\xi_1/\ell,\ldots,\xi_n/\ell\}$.
\end{corollary}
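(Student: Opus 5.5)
This is Lemma~\ref{lem:local-af} specialized to the model support function $h=\ell$. We check that $\ell$ satisfies all the hypotheses of Section~\ref{sec:3-spectral}. Since $\theta<\frac\pi2$, we have $\ell\ge 1-\cos\theta>0$. Moreover $A[\ell]=\sigma>0$, and $\ell$ satisfies the Robin condition $\nabla_\mu\ell=\cot\theta\,\ell$ on $\partial\Ccap$. So $\ell$ is an admissible capillary convex support function. In particular Theorem~\ref{thm-mwwx} applies with $f_1=\dots=f_n=\ell$, and the positivity requirement holds because $\ell>0$.

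Next we evaluate each quantity in \eqref{eq:local-af} at $h=\ell$. Since $A[\ell]=\sigma$, we get $\det A[\ell]=1$ and the cofactor tensor is $U^{ij}=\sigma^{ij}$. Hence the left side of \eqref{eq:local-af} is $\int_{\Ccap}\ell^2|\nabla f|^2\,d\sigma$, and the weight becomes $h\det A[h]=\ell$. The bracket on the right is then
\[
\int_{\Ccap}f^2\ell\,d\sigma-\frac{\bigl(\int_{\Ccap}f\ell\,d\sigma\bigr)^2}{\int_{\Ccap}\ell\,d\sigma},
\]
and expanding the square shows this equals $\int_{\Ccap}(f-\bar f)^2\ell\,d\sigma$. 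This gives \eqref{eq:model-poincare}. The equality characterization \eqref{eq:poincare-kernel} with $h=\ell$ gives exactly $f=a+\sum_\alpha b_\alpha\xi_\alpha/\ell$.

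For the spectral reformulation, the operator $\cL_h$ reduces at $h=\ell$ to $\cL_\ell f=-\ell^{-1}\diver(\ell^2\nabla f)$. This operator is symmetric with respect to the measure $\ell\,d\sigma$ under the Neumann condition, because the boundary term $\int_{\partial\Ccap}\ell^2 g\nabla_\mu f$ vanishes. Its Rayleigh quotient is
\[
\frac{\int_{\Ccap}\ell^2|\nabla f|^2\,d\sigma}{\int_{\Ccap}f^2\ell\,d\sigma}
\]
on functions with $\int_{\Ccap} f\ell\,d\sigma=0$. Constants form the kernel, and \eqref{eq:model-poincare} states that the first nonzero Neumann eigenvalue is at least $n$. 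Equality is attained precisely on the span of the $\xi_\alpha/\ell$. These functions are genuine eigenfunctions: the computation after \eqref{eq:basic-modes}, with $h=\ell$, gives $\cL_\ell(\xi_\alpha/\ell)=n\,\xi_\alpha/\ell$ together with the Neumann condition \eqref{eq:basic-boundary-quotients}. They also have zero $\ell$-weighted mean, since $\int_{\Ccap}\xi_\alpha\,d\sigma=0$ by the symmetry $\xi_\alpha\mapsto-\xi_\alpha$ of $\Ccap$. Therefore the first nonzero eigenvalue is exactly $n$, and its eigenspace is $\operatorname{span}\{\xi_\alpha/\ell\}$; by the variational characterization, any first eigenfunction attains equality and hence lies in this span.

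There is essentially no obstacle here; the result is a direct specialization. The only point needing care is that Lemma~\ref{lem:local-af} was stated under the standing hypotheses on $h$, and we have verified above that $h=\ell$ meets all of them.
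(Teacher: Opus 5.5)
Your proposal is correct and follows the paper's proof, which likewise specializes Lemma~\ref{lem:local-af} to $h=\ell$ using $A[\ell]=\sigma$, $\det A[\ell]=1$ and $U^{ij}=\sigma^{ij}$. Your checks that $\ell$ meets the standing hypotheses, and your argument for the spectral reformulation (the $\xi_\alpha/\ell$ satisfy $\cL_\ell(\xi_\alpha/\ell)=n\,\xi_\alpha/\ell$, have zero $\ell$-weighted mean, and every eigenfunction with eigenvalue $n$ attains equality), fill in details that the paper leaves implicit.
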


\begin{proof}
If $h=\ell$, then $A[\ell]=\sigma$, $\det(A[\ell])=1$, and
$U^{ij}=\sigma^{ij}$, so \eqref{eq:local-af} reduces to
\eqref{eq:model-poincare}.
\end{proof}

\subsection{A  Poincar\'e  inequality for the Dirichlet boundary condition}

The previous subsection treated the Neumann case. We now establish the corresponding inequality in the Dirichlet case. The key input is that the boundary defining function $\rho$ satisfies
\begin{eqnarray}\label{eq:rho}
  \nabla^{2}\rho+\rho\sigma=-\cos\theta\,\sigma,
\end{eqnarray}
which, in the acute-angle regime, produces a nonnegative remainder term.

\begin{lemma}\label{lem:hardy}
If $f\in C^{2}(\Ccap)$ and $f=0$ on $\partial\Ccap$, then
\begin{align}
  &\int_{\Ccap} h^{2}U^{ij}f_i f_j\,\dd
  -n\int_{\Ccap} f^{2}h\det(A[h])\,\dd\notag\\
  &=\int_{\Ccap} \rho^{2}U^{ij}
    \nabla_i\left(\frac{hf}{\rho}\right)
    \nabla_j\left(\frac{hf}{\rho}\right)\,\dd
   +\cos\theta\int_{\Ccap}\frac{\tr U}{\rho}(hf)^{2}\,\dd.
  \label{eq:hardy}
\end{align}
In particular,
\begin{equation}\label{eq:Dirichlet}
  \int_{\Ccap} h^{2}U^{ij}f_i f_j\,\dd
  \ge n\int_{\Ccap} f^{2}h\det(A[h])\,\dd
  +\cos\theta\int_{\Ccap}\frac{\tr U}{\rho}(hf)^{2}\,\dd.
\end{equation}
Equality holds if and only if $f=a\,h^{-1}\rho$ for some constant $a\in\R$.
\end{lemma}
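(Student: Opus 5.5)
The plan is a Picone-type (ground state) substitution carried out twice, once with the weight $h$ and once with the boundary defining function $\rho$, and then subtracting the two. Set $q\coloneqq hf$. Since $f=0$ on $\partial\Ccap$, also $q=0$ there. The function $\rho$ vanishes on $\partial\Ccap$ to first order, with $\nabla_\mu\rho=-\sin\theta\neq0$ by \eqref{qeq:robin-bry}, and $\rho>0$ in the interior. Hence $g\coloneqq q/\rho$ extends smoothly to $\Ccap$, by a Hadamard-type division in a collar of $\partial\Ccap$. In particular $q^2/\rho=\rho g^2$ is smooth and vanishes on the boundary, so every integral in \eqref{eq:hardy} is finite. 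I will work with $q=\rho g$ throughout to avoid dividing by $\rho$ in any boundary term.

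\emph{Step 1 (identity with weight $h$).} Expand $h^2U^{ij}\nabla_i(q/h)\nabla_j(q/h)=U^{ij}q_iq_j-2\frac{q}{h}U^{ij}q_ih_j+\frac{q^2}{h^2}U^{ij}h_ih_j$. The middle term equals $-U^{ij}\nabla_i(q^2)\,h_j/h$. Integrate it by parts, using $\nabla_iU^{ij}=0$ from \eqref{eq:U-div-free}; the boundary term carries the factor $q^2=0$ and drops out. This gives
\[
\int_{\Ccap} h^2U^{ij}f_if_j\dd=\int_{\Ccap}U^{ij}q_iq_j\dd+\int_{\Ccap}\frac{q^2}{h}U^{ij}h_{ij}\dd.
\]
Now $U^{ij}h_{ij}=U^{ij}(A[h]_{ij}-h\sigma_{ij})=n\det A[h]-h\tr U$. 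So the right-hand side is $\int U^{ij}q_iq_j\dd+n\int f^2h\det A[h]\dd-\int q^2\tr U\dd$.

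\emph{Step 2 (identity with weight $\rho$).} Do the same expansion for $\rho^2U^{ij}g_ig_j$ with $g=q/\rho$. The cross term is $-U^{ij}\nabla_i(q^2)\rho_j/\rho=-U^{ij}\nabla_i(\rho g^2)\rho_j$, which is smooth up to the boundary. Integrating it by parts produces the boundary term $\int_{\partial\Ccap}\rho g^2U(\nabla\rho,\mu)$, and this vanishes because $\rho=0$ there. The result is
\[
\int_{\Ccap}\rho^2U^{ij}g_ig_j\dd=\int_{\Ccap}U^{ij}q_iq_j\dd+\int_{\Ccap}\rho g^2U^{ij}\rho_{ij}\dd.
\]
By \eqref{eq:rho}, $U^{ij}\rho_{ij}=-(\rho+\cos\theta)\tr U$, so the last integral equals $-\int q^2\tr U\dd-\cos\theta\int\frac{\tr U}{\rho}q^2\dd$.

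\emph{Step 3 (subtract).} Subtracting the identity of Step 2 from that of Step 1 cancels both $\int U^{ij}q_iq_j\dd$ and $\int q^2\tr U\dd$. What remains is exactly \eqref{eq:hardy}. For $\theta\in(0,\frac\pi2)$ we have $\cos\theta>0$, $\rho>0$ in the interior and $\tr U>0$, and the first term on the right of \eqref{eq:hardy} is nonnegative since $U>0$. This yields \eqref{eq:Dirichlet}. Equality in \eqref{eq:Dirichlet} forces $\rho^2U^{ij}g_ig_j\equiv0$, so $\nabla g=0$ in the interior and $g\equiv a$. Hence $hf=a\rho$, i.e.\ $f=a\rho/h$. (For $n=1$ we have $U=1$ and the computation is the same scalar one.)

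The one delicate point is Step 2: the weight $\rho$ degenerates on $\partial\Ccap$. One has to check that $g$ is smooth up to the boundary and that each integration by parts there is legitimate. I will handle this by never writing $1/\rho$ in a boundary integrand. Alternatively, one can integrate over $\{\rho>\varepsilon\}$ and let $\varepsilon\to0$: there the boundary contribution is $O(\varepsilon)$, because $q^2/\rho=\rho g^2$.
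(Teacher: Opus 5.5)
Your proof is correct and follows the paper's argument: for $q=hf$, both compute $\int_{\Ccap}U^{ij}q_iq_j\,d\sigma$ in two ways and subtract. The first way uses $q=h\cdot f$, with $\nabla_iU^{ij}=0$ and $U^{ij}A[h]_{ij}=n\det A[h]$; the second uses $q=\rho\cdot(q/\rho)$, with $A[\rho]=-\cos\theta\,\sigma$ and $\rho=0$ on $\partial\Ccap$.

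There are two harmless slips. In Step 2, the displayed equality $-U^{ij}\nabla_i(q^2)\rho_j/\rho=-U^{ij}\nabla_i(\rho g^2)\rho_j$ is false as written: the right-hand side equals the cross term \emph{plus} the last term $\rho^{-2}q^2U^{ij}\rho_i\rho_j$. That combination is exactly what you need, so your Step 2 identity still holds. Also, for $f\in C^2$ the quotient $g=q/\rho$ is only $C^1$ up to the boundary rather than smooth, but $C^1$ is all the integrations by parts require.
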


\begin{proof}
Let $g\coloneqq hf$. Define $v$ by $g=v\rho$, that is, $v=g/\rho$. Since $f=0$ on $\partial\Ccap$, it follows that $g=0$ on $\partial\Ccap$ as well. We compute the integral of $U^{ij} g_ig_j$ in two ways.

\smallskip
\noindent\emph{Step 1.}
Expanding $g_i=h_if+hf_i$ gives
\begin{align*}
  U^{ij}g_ig_j
  &=U^{ij}(h_if+hf_i)(h_jf+hf_j)\\
  &=h^2U^{ij}f_if_j+2fh\,U^{ij}h_if_j+f^2U^{ij}h_ih_j.
\end{align*}
Integrating 
$\nabla_i (U^{ij}f^2 h h_j)$
over $\Ccap$ and using that $g=0$ on $\partial\Ccap$ (hence the boundary term vanishes), together with $\nabla_iU^{ij}=0$ from \eqref{eq:U-div-free}, yields
\begin{align*}
  0&=\int_{\Ccap}U^{ij}\nabla_i(f^2hh_j)\,\dd\\
   &=\int_{\Ccap}\Bigl(2fh\,U^{ij}h_if_j+f^2U^{ij}h_ih_j+f^2h\,U^{ij}h_{ij}\Bigr)\dd.
\end{align*}
Substituting this into the expansion of $\int U^{ij}g_ig_j$ gives
\[
  \int_{\Ccap}U^{ij}g_ig_j\dd
  =\int_{\Ccap}h^2U^{ij}f_if_j\dd-\int_{\Ccap}f^2h\,U^{ij}h_{ij}\dd.
\]
Finally, since $A[h]_{ij}=\nabla_{ij}h+h\sigma_{ij}$, we have
\[
  U^{ij}h_{ij}
  =U^{ij}A[h]_{ij}-h\,\tr U
  =n\det(A[h])-h\,\tr U,
\]
so
\begin{equation}
  \int_{\Ccap} h^2U^{ij}f_i f_j \dd-n\int_{\Ccap} f^2h \det(A[h]) \dd
  =\int_{\Ccap} U^{ij}g_i g_j \dd -\int_{\Ccap}(\tr U)g^2\dd.
  \label{eq:hardy-step1}
\end{equation}

\smallskip

\noindent\emph{Step 2.}
Since $\rho$ is a boundary defining function, the quotient
\[ v= \frac{g}{\rho}=\frac{hf}{\rho}\]
extends regularly to $\partial\Ccap$, and in particular $v\in C^{1}(\Ccap)$. Writing $g_i=\rho_i v+\rho v_i$, we obtain as in \textit{Step~1}
\begin{align*}
  U^{ij}g_i g_j
  &= \rho^{2}U^{ij}v_i v_j+2\rho v\,U^{ij}\rho_i v_j+v^{2}U^{ij}\rho_i\rho_j.
\end{align*}
Integrating the cross term by parts, and using $\nabla_iU^{ij}=0$ and $g=0$ on $\partial\Ccap$, we get
\[
  \int_{\Ccap}2\rho v\,U^{ij}\rho_i v_j\,\dd
  =-\int_{\Ccap}v^{2}\,U^{ij}(\rho\rho_{ij}+\rho_i\rho_j)\,\dd.
\]
Therefore
\begin{align*}
  \int_{\Ccap}U^{ij}g_i g_j\,\dd
  &= \int_{\Ccap}\rho^{2}U^{ij}v_i v_j\,\dd
   - \int_{\Ccap}\rho v^{2}\,U^{ij}\rho_{ij}\,\dd.
\end{align*}
Using $g^{2}=\rho^{2}v^{2}$ and rearranging,
\[
  \int_{\Ccap}U^{ij}g_i g_j\,\dd-\int_{\Ccap}(\tr U)g^{2}\,\dd
  =\int_{\Ccap}\rho^{2}U^{ij}v_i v_j\,\dd
   -\int_{\Ccap}\frac{g^{2}}{\rho}\,U^{ij}(\rho_{ij}+\rho\sigma_{ij})\,\dd.
\]
Combining this with \eqref{eq:hardy-step1} and \eqref{eq:rho} yields \eqref{eq:hardy}. The inequality \eqref{eq:Dirichlet}, together with its equality characterization, then follows immediately. 
\end{proof}

For the model $h=\ell$, Lemma~\ref{lem:hardy} yields the sharp weighted
Poincar\'e inequality under the Dirichlet boundary condition.
\begin{corollary}
If $f\in C^{2}(\Ccap)$ and $f=0$ on $\partial\Ccap$, then 
\begin{equation*}
  \int_{\Ccap}\ell^{2}|\nabla f |^{2}\,\dd
  \ge n\int_{\Ccap}f^{2}\ell\,\dd
  + n\cos\theta\int_{\Ccap}\frac{1}{\rho}f^{2}\ell^{2}\,\dd.
\end{equation*}
Equality holds if and only if $f=a\,\ell^{-1}\rho$ for some constant $a\in\R$.
\end{corollary}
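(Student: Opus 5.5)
This corollary follows by specializing Lemma~\ref{lem:hardy} to the model support function $h=\ell$. The plan is to first confirm that $h=\ell$ is admissible: it is positive, satisfies the Robin condition $\nabla_\mu\ell=\cot\theta\,\ell$ on $\partial\Ccap$, and has $A[\ell]=\sigma>0$. Then I will read off each term of \eqref{eq:Dirichlet} for this choice.

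With $A[\ell]=\sigma$ we have $\det A[\ell]=1$, and the cofactor tensor \eqref{eq:U-def} is $U^{ij}=\sigma^{ij}$. Hence $\tr U=n$. Substituting into \eqref{eq:Dirichlet}:
\begin{itemize}
\item the left side becomes $\int_{\Ccap}\ell^2|\nabla f|^2\dd$;
\item the first term on the right becomes $n\int_{\Ccap}f^2\ell\dd$;
\item the remainder term becomes $\cos\theta\int_{\Ccap}\frac{n}{\rho}(\ell f)^2\dd=n\cos\theta\int_{\Ccap}\frac1\rho f^2\ell^2\dd$.
\end{itemize}
This is exactly the claimed inequality.

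The remainder term is finite because $f$ vanishes on $\partial\Ccap$ and $\rho$ is a boundary defining function. So $f^2/\rho$ is bounded near the boundary, and the integral is well defined. This is the same regularity of $v=hf/\rho$ already used in the proof of Lemma~\ref{lem:hardy}.

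For the equality case, Lemma~\ref{lem:hardy} says equality holds if and only if $f=a\,h^{-1}\rho$. With $h=\ell$ this reads $f=a\,\ell^{-1}\rho$.

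There is no real obstacle. The only step needing care is checking that $h=\ell$ meets the hypotheses of Lemma~\ref{lem:hardy}, and this was recorded in Section~\ref{sec2}.
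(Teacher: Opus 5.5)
Your proposal is correct and matches the paper's argument: the paper obtains this corollary by setting $h=\ell$ in Lemma~\ref{lem:hardy}. With $A[\ell]=\sigma$ one has $\det A[\ell]=1$, $U^{ij}=\sigma^{ij}$ and $\tr U=n$, and the equality case $f=a\,h^{-1}\rho$ becomes $f=a\,\ell^{-1}\rho$, just as you wrote.
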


The Corollary implies that the Dirichlet eigenvalue is strictly larger than $n$, when $\theta <\pi/2$. Moreover, the last term on the right hand side plays a crucial role, which comes from the right hand side of \eqref{eq:rho}.

\section{Proofs of Theorems \ref{prop:normalized} and \ref{thm:classification}}\label{sec4}
We now combine the estimates for the Neumann and Dirichlet modes obtained
in the preceding sections. The local capillary Alexandrov--Fenchel
inequality controls the first $n$ components of the conformal frame, while
Lemma~\ref{lem:hardy} provides the corresponding estimate for the last
component. Summing these estimates and using the completeness
identities \eqref{eq:zero-completeness} and \eqref{eq:first-completeness},
we obtain a reverse weighted Poincar\'e inequality for the capillary
support function $u=h/\ell$. This is the key estimate in the rigidity
argument.

To keep track of the mean and the contributions of the Neumann modes, set
\[
  M\coloneqq \int_{\Ccap}\ell\,\dd,
  \qquad
  \bar u\coloneqq \frac{1}{M}\int_{\Ccap}u\ell\,\dd,
  \qquad
  m_{\alpha}\coloneqq \int_{\Ccap}F_{\alpha}\ell\,\dd,
  \qquad
  m\coloneqq (m_1,\ldots,m_n).
\]

For $1\leq\alpha\leq n$, \eqref{eq:F-boundary} shows that $F_\alpha$
satisfies the Neumann boundary condition. Recall that
\begin{eqnarray}\label{eq:soliton-density}
  h\det A[h]=\ell.
\end{eqnarray}
Applying Lemma~\ref{lem:local-af} to $F_\alpha$, we obtain
\[
  \int_{\Ccap} h^{2}U^{ij}(F_{\alpha})_i(F_{\alpha})_j\,d\sigma
  \ge n\left(\int_{\Ccap}F_{\alpha}^{2}\ell\,d\sigma-\frac{m_{\alpha}^{2}}{M}\right).
\]
Summing over $1\le \alpha\le n$, we obtain
\begin{eqnarray}\label{eq:alpha}
  \sum_{\alpha=1}^{n}\int_{\Ccap} h^{2}U^{ij}(F_{\alpha})_i(F_{\alpha})_j\,d\sigma
  \ge n\left(\int_{\Ccap}\sum_{\alpha=1}^{n}F_{\alpha}^{2}\ell\,d\sigma-\frac{|m|^{2}}{M}\right).
\end{eqnarray}

Since \(F_{n+1}=0\) on \(\partial \Ccap\) and \(\rho\) is a smooth boundary defining function, the quotient \(F_{n+1}/\rho\) extends continuously to \(\partial \Ccap\). Equivalently, since \(F_{n+1}=-\tan\theta\,\zeta\), the quotient \(\zeta/\rho\) extends continuously as well. Thus all the integrands below involving \(1/\rho\) extend continuously to the boundary. For $F_{n+1}$, in view of \eqref{eq:F-boundary}, applying Lemma~\ref{lem:hardy} yields
\begin{eqnarray}
    \int_{\Ccap} h^{2}U^{ij}(F_{n+1})_i(F_{n+1})_j\,d\sigma
  \ge n\int_{\Ccap}F_{n+1}^{2}\ell\,d\sigma
  +\cos\theta\int_{\Ccap}\frac{\tr U}{\rho}\,h^{2}F_{n+1}^{2}\,d\sigma, 
\end{eqnarray}
which, together with \eqref{eq:alpha}, implies
\begin{equation}\label{eq:alpha1}
  \sum_{A=1}^{n+1}\int_{\Ccap} h^{2}U^{ij}(F_{A})_i(F_{A})_j\,d\sigma
  \ge n\left(\int_{\Ccap}\sum_{A=1}^{n+1}F_{A}^{2}\ell\,d\sigma-\frac{|m|^{2}}{M}\right) 
  +\cos\theta\int_{\Ccap}\frac{\tr U}{\rho}\,h^{2}F_{n+1}^{2}\,d\sigma.
\end{equation}

We next estimate the last term on the right-hand side of \eqref{eq:alpha1}, using part of the contribution from the left-hand side. To proceed, we write $A\coloneqq A[h]$, \eqref{eq:soliton-density} gives
\begin{eqnarray}\label{eq:cofactor-soliton}
  U=\det(A)A^{-1}
  =\frac{\ell}{h}A^{-1}.
\end{eqnarray}
It follows that
\[
h^2\tr U
=
h\ell\,\tr(A^{-1}).
\]
Together with \eqref{eq:F-horizontal}, we obtain
\begin{equation}\label{eq:nplusone}
      \cos\theta\int_{\Ccap}\frac{\tr U}{\rho}\,h^{2}F_{n+1}^{2}\,d\sigma
  =\int_{\Ccap}\frac{\sin^{2}\theta}{\cos\theta\,\rho}\,h\ell\,\zeta^{2}\,\tr(A^{-1})\,d\sigma.
\end{equation}
Combining \eqref{eq:alpha1}, \eqref{eq:nplusone}, and \eqref{eq:first-completeness}, we derive 
\begin{equation}
\int_{\Ccap}h^{2}U^{ij}B_{ik}B_j{}^{k}\,\dd\geq  n\left(\int_{\Ccap} \sum_{A=1}^{n+1}F_A^{2} \ell\,\dd-\frac{|m|^{2}}{M}\right)
  +\int_{\Ccap}\frac{\sin^{2}\theta}{\cos\theta\,\rho}\,h\ell\,\zeta^{2}\,\tr(A^{-1})\,\dd .
  \label{eq:combined}
\end{equation}
By \eqref{eq:zeta-B-def} and \eqref{eq:cofactor-soliton}, we have
\begin{equation}\label{eq:right}
  \int_{\Ccap} h^{2}U^{ij}B_{ik}B_j{}^{k}\,d\sigma
  =\int_{\Ccap}h\ell\,\bigl(\tr A-2n\zeta+\zeta^{2}\tr(A^{-1})\bigr)\,\dd.
\end{equation}
Recall that $\sin^{2}\theta=\ell+\cos\theta\,\rho$. Combining \eqref{eq:zero-completeness}, \eqref{eq:combined}, and \eqref{eq:right}, we obtain
\begin{equation}\label{eq:combined2}
\int_{\Ccap}h\ell\left(\tr A-2n\zeta-\frac{\ell \tr(A^{-1})}{\cos\theta\,\rho} \zeta^{2}\right) \dd\geq   n\left(\int_{\Ccap}(\ell^{2}|\nabla u|^{2}+\sin^{2}\theta\,u^{2})\,\ell\,\dd-\frac{|m|^{2}}{M}\right).
\end{equation}
Using $\tr(A^{-1})\tr A\geq n^2$, we get
\begin{eqnarray}
   -2n\zeta-\frac{\ell \tr (A^{-1})}{\cos\theta \rho}\zeta^2 &= & \frac{n^2\cos\theta \rho}{\ell \tr(A^{-1})}-\frac{\ell\tr(A^{-1})}{\cos\theta \rho} \left(\zeta+\frac{n\cos\theta \rho}{\ell \tr(A^{-1})} \right)^2 \notag
\\&  \le  &\frac{n^2\cos\theta \rho}{\ell \tr(A^{-1})}
  \le\frac{\cos\theta \rho}{\ell}\tr A.
    \label{eq:square-completion}
\end{eqnarray}
We remark that here we used $\cos\theta>0$ and $\rho>0$.
Substituting \eqref{eq:square-completion} into \eqref{eq:combined2}, we obtain
 
\begin{equation}
\sin^2\theta \int_{\Ccap}h\,\tr A\dd\geq   n\left(\int_{\Ccap}(\ell^2|\nabla u|^2+\sin^2\theta  u^2)\ell\dd
   -\frac{|m|^2}{M}\right).
  \label{eq:main-estimate}
\end{equation}
By \eqref{eq:trace-identity}, we have
\[
  \int_{\Ccap} h\,\tr A\,\dd
  = n\int_{\Ccap} u^{2}\ell\,\dd
  - \int_{\Ccap}\ell^{2}|\nabla u|^{2}\,\dd.
\]
Together with \eqref{eq:main-estimate}, this yields
\begin{equation}
  n\int_{\Ccap}\ell^3|\nabla u|^2\,\dd
  +\sin^2\theta\int_{\Ccap}\ell^2|\nabla u|^2\,\dd
  \le n\frac{|m|^2}{M}.
  \label{eq:energy-moment}
\end{equation}

We now estimate $m$. By the symmetry of $\Ccap$ in the horizontal variables and \eqref{eq:omega-def}, it is easy to see
\[
  \int_{\Ccap}\sin\theta\,\omega_{\alpha}\,\ell\,\dd
  =\sin^{2}\theta\int_{\Ccap}\xi_{\alpha}\,\dd=0,
  \qquad 1\leq\alpha\leq n.
\]
Hence we have
\[
  m_{\alpha}
  =\int_{\Ccap}\bigl(F_{\alpha}-\bar u\sin\theta\,\omega_{\alpha}\bigr)\ell\,\dd.
\]
By the Cauchy--Schwarz inequality and \eqref{eq:T-zero-completeness} with $c=\bar u$, we obtain
\begin{eqnarray}
  \frac{|m|^{2}}{M}
  &\le& \int_{\Ccap}\sum_{\alpha=1}^{n}
  \bigl(F_{\alpha}-\bar u\sin\theta\,\omega_{\alpha}\bigr)^{2}\ell\,\dd 
  \le \int_{\Ccap}\sum_{A=1}^{n+1}
  \bigl(F_A-\bar u\sin\theta\,\omega_A\bigr)^{2}\ell\,\dd \notag\\
  &=& \int_{\Ccap}\ell^{3}|\nabla u|^{2}\,\dd
        +\sin^{2}\theta\int_{\Ccap}(u-\bar u)^{2}\ell\,\dd.
  \label{eq:moment-estimate}
\end{eqnarray}
Combining \eqref{eq:energy-moment} and \eqref{eq:moment-estimate},  we obtain the desired reverse weighted Poincar\'e
inequality 
\begin{equation}
  \int_{\Ccap}\ell^2|\nabla u|^2\dd
  \le n\int_{\Ccap}(u-\bar u)^2\ell\dd.
  \label{eq:reverse-poincare}
\end{equation}
This is the decisive step in the rigidity argument. Indeed,
\eqref{eq:reverse-poincare} is the reverse of the sharp model inequality \eqref{eq:model-poincare}. Hence both inequalities must be
equalities, and the characterization of the equality case yields the
rigidity of the normalized support function.

\begin{proof}[\textbf{Proof of Theorem~\ref{prop:normalized}}]
By the preceding discussion, equality holds in \eqref{eq:model-poincare}.
Therefore, the equality characterization in Corollary \ref{cor:weighted-poincare-neumann} yields 
\[
  u=a+\sum_{\alpha=1}^{n}b_\alpha\frac{\xi_\alpha}{\ell},
  \qquad
  h=a\ell+\sum_{\alpha=1}^{n}b_\alpha\xi_\alpha.
\]
Using \eqref{eq:model-hessians}, we find
\[
  A[h]=a\sigma.
\]
Since $A[h]>0$, it follows that $a>0$. The equation
$h\det A[h]=\ell$ then reduces to
\[
  a^n\left(
    a\ell+\sum_{\alpha=1}^{n}b_\alpha\xi_\alpha
  \right)=\ell.
\]
Applying the linear operator
\[
  A[\cdot]=\nabla^2(\cdot)+(\cdot)\sigma
\]
to this identity gives $a^{n+1}=1$. Hence $a=1$, and the same identity
implies $b_\alpha=0$ for all $1\le\alpha\le n$. Therefore $h=\ell$.
\end{proof}

It remains only to translate the normalized rigidity statement back to
the original geometric formulation.

\begin{proof}[\textbf{Proof of Theorem~\ref{thm:classification}}]
Let
\begin{equation*}
  h_p(\xi)\coloneqq \langle X(\xi)-p,\nu(\xi)\rangle,
  \qquad \xi\in\Ccap.
\end{equation*}
If $p\in\partial\R^{n+1}_{+}$, then $\langle p,e\rangle=0$, and hence
\[
  h_p=h-\langle p,\xi\rangle.
\]
Since $A[\langle p,\xi\rangle]=0$, it follows that $A[h_p]=A[h]$. Moreover, \eqref{qeq:robin-bry} shows that $\langle p,\xi\rangle$ satisfies the same Robin boundary condition, so \eqref{eq:geometric-soliton} is equivalent to
\begin{equation*}
\left\{
\begin{array}{rlll}
  h_p\det A[h_p] &=& \lambda^{-1}\ell & \text{in }\Ccap,\\
  \nabla_{\mu}h_p &=& \cot\theta\,h_p & \text{on }\partial\Ccap.
\end{array}
\right.
\end{equation*}
In particular, $h_p>0$. Thus $\widetilde h\coloneqq \lambda^{1/(n+1)}h_p$ satisfies \eqref{eq:normalized-equation}, and Theorem~\ref{prop:normalized} gives
\[
  h_p=\lambda^{-1/(n+1)}\ell.
\]
Using the identity
\[
  \nabla\ell+\ell(\xi)(\xi-\cos\theta\,e)=\xi,
\]
we obtain
\[
  X-p=\lambda^{-1/(n+1)}\xi.
\]
This is precisely \eqref{eq:classification}, and the proof is complete.
\end{proof}

We conclude by indicating precisely where the acute-angle assumption enters the proof.

\begin{remark}\label{rem:acute-angle-necessary}
The acute-angle assumption is used essentially in the square-completion step \eqref{eq:square-completion}. Indeed, since
\[
  \nabla^{2}\rho+\rho\sigma=-\cos\theta\sigma,
\]
the last term on the right-hand side of \eqref{eq:hardy} is nonnegative for $\theta\in(0,\pi/2)$. When $\theta\in(\pi/2,\pi)$, this term changes sign, and the present argument does not appear to extend directly to the obtuse-angle regime.
\end{remark}


\begin{remark}
    Theorem \ref{thm:classification} can be viewed as a rigidity result for the logarithmic Minkowski problem  with prescribed anisotropic weight $\ell$.
\end{remark}

\bigskip

\appendix
\section{Proof of Theorem \ref{thm-mwwx} for the  planar case}\label{appendix}  

In this appendix, we provide, for completeness, a proof of Theorem \ref{thm-mwwx} in the planar case.

\medskip

\noindent{\it Proof of Theorem \ref{thm-mwwx} for $n=1$}. Let $f_{1}, f_{2}$ be positive, strictly convex capillary functions, and let $ K_{f_1}, K_{f_2}\subset \overline{\R^2_+} $ be the capillary convex bodies  whose support functions are $f_{1}$ and $f_{2}$. In view of \eqref{eqn:n=1-mixed-n-volume} and \cite[Section~5.1]{Schneider2014},  the standard mixed-area $V_2(K_{f_1},K_{f_2})$ of $K_{f_1},K_{f_2}$ satisfies

\[
  V_2(K_{f_1},K_{f_2})\coloneqq \frac{1}{2}\int_{-\theta}^{\theta}f_{1}(f^{''}_{2}+f_{2})ds=V(f_1,f_2).
\]
The classical planar Minkowski inequality
\cite[Theorem~7.2.1, Eq.~(7.18)]{Schneider2014} states that
\begin{eqnarray}\label{eqn:planar-minkowski}
      V(f_1,f_2)^2\geq \text{Vol}(K_{f_1})\text{Vol}(K_{f_2}),
\end{eqnarray}
with equality if and only if $K_{f_1}$ and $K_{f_2}$ are homothetic. Namely,
\begin{eqnarray}\label{eqn:n=1-equality-character}
    K_{f_1}=\lambda K_{f_2}+ z,
\end{eqnarray}for some $\lambda>0$ and $z\in \R^2$.

Now set
$$f_t\coloneqq f_1+tf.$$ Since $f_1>0$ and
$f_1''+f_1>0$, the function $f_t$ is positive, strictly convex and a capillary function for all sufficiently small $|t|$.   From \eqref{eqn:planar-minkowski}, we obtain
\[
  V(f_t, f_1)^2
  \geq \text{Vol}(K_{f_t})\text{Vol}(K_{f_1}),
\]
where $K_{f_t}$ is the capillary convex body with respect to the capillary function $f_{t}$.

Since $\text{Vol}(K_{f_t})=V_2(K_{f_t}, K_{f_t})=V_2(f_{t},f_{t})$, the mixed-area identification above
yields
\[
  V(f_t,f_1)^2\geq V(f_t,f_t)V(f_1,f_1).
\]
Using bilinearity, we obtain
\[
  \bigl(V(f_1,f_1)+tV(f,f_1)\bigr)^2
  \geq V(f_1,f_1)\left(V(f_1,f_1)+2tV(f,f_1)+t^2V(f,f)\right).
\]
After cancellation and division by $t^2$, this is precisely \eqref{eq:function-AF} for $n=1$.

If equality holds in \eqref{eq:function-AF}, the expansion above shows that
equality holds in the classical Minkowski inequality for $K_{f_t}$ and
$K_{f_1}$ for every sufficiently small $t\neq0$. By the equality
characterization in \eqref{eqn:n=1-equality-character}, the two convex bodies are homothetic. Thus there exist
$\lambda_t>0$ and $z_t\in\R^2$ such that
\[
  K_{f_t}=\lambda_tK_{f_1}+z_t.
\]
Taking the minimum of the
$E_2$-coordinate in this identity and using
\[
  \min_{x\in K_{f_t}}\langle x,E_2\rangle
  =\min_{x\in K_{f_1}}\langle x,E_2\rangle=0,
\]
we obtain $\langle z_t,E_2\rangle=0$, so $z_t=b_tE_1$ for some $b_t\in \R$. Taking the usual
support functions and restricting to the curved normals  gives
\[
  f_1+tf=\lambda_tf_1+b_t\xi_1,~~~~\xi\in \Ccap.
\]
Fixing one sufficiently small $t\neq0$ yields $$f=af_1+b\xi_1,$$ where $a\coloneqq \frac{\lambda_t-1}{t} \in \R,b\coloneqq \frac{b_t}{t}\in\R$. This is exactly \eqref{eqn:af-ineq-equality} for $n=1$.

\bigskip

Conversely, $A[\xi_1]=\xi_1''+\xi_1=0$, and symmetry of $V$ gives $V(\xi_1,g)=0$ for every capillary function $g$. Thus $f=af_1+b\xi_1$ implies
\[
  V(f,f_1)=aV(f_1,f_1),
  \qquad
  V(f,f)=a^2V(f_1,f_1),
\]
which proves equality and completes the proof. 
\qed

\bigskip

\noindent\textit{Acknowledgments.} G.W. would like to thank Chao Xia for many discussions on classification of solutions to the anisotropic log-Minkowski problem.
X.M. was supported by the Postdoctoral Fellowship Program of CPSF under Grant Numbers 2025T180843 and 2025M773082. L.W. was partially supported by CRM De Giorgi of Scuola Normale Superiore. L.W. is a member of GNAMPA as part of INdAM.

\end{document}